\numberwithin{equation}{section}
\newtheorem*{Th*}{Theorem \ref{thm:isoEDHecke}}
\newtheorem*{Th**}{Theorem~\ref{thm:isoEAHecke}}
\newtheorem*{Prop*}{Proposition~\ref{prop:formalDHecke}}
\newtheorem*{Prop**}{Proposition~\ref{prop:formalAHecke}}
\definecolor{myblue}{rgb}{0,.5,1}
\definecolor{myred}{rgb}{0.9,0,0}
\definecolor{mygreen}{rgb}{0,0.7,0}
\newcommand{\Sy}{\mathfrak{S}}
\newcommand{\bi}{{\boldsymbol{i}}}
\newcommand{\bj}{{\boldsymbol{j}}}
\newcommand{\bk}{{\boldsymbol{k}}}
\newcommand{\bV}{\raisebox{0.03cm}{\mbox{\footnotesize$\textstyle{\bigwedge}$}}}
\newcommand{\und}[1]{{\underline{#1}}}
\newcommand{\cH}{\mathcal{H}}
\newcommand{\cP}{\mathcal{P}}
\newcommand{\brak}[1]{\langle #1\rangle}
\DeclareMathOperator{\Seq}{Seq}
\DeclareMathOperator{\End}{End}
\DeclareMathOperator{\seq}{Seq}
\newtheorem{thm}{Theorem}[section]
\newtheorem{lem}[thm]{Lemma}
\newtheorem{cor}[thm]{Corollary}
\newtheorem{prop}[thm]{Proposition}
\theoremstyle{definition}
\newtheorem{defn}[thm]{Definition}
\newtheorem{rem}[thm]{Remark}
\newtheorem{Assumption}[thm]{Assumption}
\newtheorem{Questions}[thm]{Questions}
\newcommand{\bN}{\mathbb{N}}
\newcommand{\bZ}{\mathbb{Z}}
\newcommand{\cA}{\mathcal{A}}
\newcommand{\cB}{\mathcal{B}}
\newcommand{\cQ}{\mathcal{Q}}
\newcommand{\cR}{\mathcal{R}}
\DeclareMathOperator{\Pol}{Pol}
\DeclareMathOperator{\Poll}{Poll}
\DeclareMathOperator{\PolR}{PolR}
\DeclareMathOperator{\Sym}{Sym}
\DeclareMathOperator{\Syml}{Syml}
\newcommand{\bfa}{\mathbf{a}}
\newcommand{\bfb}{\mathbf{b}}
\newcommand{\bfQ}{\mathbf{Q}}
\newcommand{\frakm}{\mathfrak{m}}
\long\def\@makecaption#1#2{%
    \vskip 10pt
    \setbox\@tempboxa\hbox{%
\small{#1: }\ignorespaces #2}%
    \ifdim \wd\@tempboxa >\captionwidth {%
        \rightskip=\@captionmargin\leftskip=\@captionmargin
        \unhbox\@tempboxa\par}%
      \else
        \hbox to\hsize{\hfil\box\@tempboxa\hfil}%
    \fi}
\newdimen\@captionmargin\@captionmargin=2\parindent
\newdimen\captionwidth\captionwidth=\hsize
\def\makeautorefname#1#2{\expandafter\def\csname#1autorefname\endcsname{#2}}
\newcommand{\fdot}[3][]{ \node [anchor = center, fill=white, draw=black,circle,inner sep=2pt] at (#3) {}; \node[xshift=-.065cm, yshift=-.065cm,anchor = south west] at (#3){\small $#1$}; \node[xshift=-.065cm, yshift=.065cm,anchor = north west] at (#3){\small $#2$}; }
\begin{document}
\allowdisplaybreaks

\newcommand{\arXivNumber}{1906.03055}

\renewcommand{\PaperNumber}{095}

\FirstPageHeading

\ShortArticleName{DG-Enhanced Hecke and KLR Algebras}

\ArticleName{DG-Enhanced Hecke and KLR Algebras}

\Author{Ruslan MAKSIMAU~$^{\rm a}$ and Pedro VAZ~$^{\rm b}$}

\AuthorNameForHeading{R.~Maksimau and P.~Vaz}

\Address{$^{\rm a)}$~Laboratoire Analyse G\'eom\'etrie Mod\'elisation, CY Cergy Paris Universit\'e,\\
\hphantom{$^{\rm a)}$}~2 av. Adolphe Chauvin (Bat.~E, 5\`eme \'etage), 95302 Cergy-Pontoise, France}
\EmailD{\href{mailto:ruslmax@gmail.com}{ruslmax@gmail.com}, \href{mailto:ruslan.maksimau@cyu.fr}{ruslan.maksimau@cyu.fr}}

\Address{$^{\rm b)}$~Institut de Recherche en Math{\'e}matique et Physique, Universit{\'e} Catholique de Louvain,\\
\hphantom{$^{\rm b)}$}~Chemin du Cyclotron 2, 1348 Louvain-la-Neuve, Belgium}
\EmailD{\href{mailto:pedro.vaz@uclouvain.be}{pedro.vaz@uclouvain.be}}
\URLaddressD{\url{https://perso.uclouvain.be/pedro.vaz}}

\ArticleDates{Received March 30, 2023, in final form November 15, 2023; Published online November 22, 2023}

\Abstract{We construct DG-enhanced versions of the degenerate affine Hecke algebra and of the affine Hecke algebra. We extend Brundan--Kleshchev and Rouquier's isomorphism and prove that after completion DG-enhanced versions of affine Hecke algebras (degenerate or nondegenerate) are isomorphic to completed DG-enhanced versions of KLR algebras for suitably defined quivers. As a byproduct, we deduce that these DG-algebras have homologies concentrated in degree zero. These homologies are isomorphic respectively to the degenerate cyclotomic Hecke algebra and the cyclotomic Hecke algebra.}

\Keywords{Hecke algebra; KLR algebra; DG-algebra}

\Classification{20C08; 16E45}

\section{Introduction}

Hecke algebras and their affine versions are fundamental objects in mathematics and have a~rich representation theory
 (see, for example, the review~\cite{Kleshchev-review}).
The representation theory of finite dimensional Hecke algebras also carries interesting symmetries which occur
in categorification of Fock spaces and Heisenberg algebras~\cite{Kh-Heisenberg,LicataSavage}.

In a series of outstanding papers, Lauda~\cite{Lauda}, Khovanov--Lauda~\cite{KL1,KL3,KL2}
and independently Rouquier~\cite{R1}, have constructed categorifications of quantum groups.
They take the form of 2-categories whose Grothendieck groups are isomorphic to the
idempotent version of the quantum enveloping algebra of a Kac--Moody algebra.
Both constructions were later proved to be equivalent by Brundan~\cite{Brundan-KLR}.
As a main ingredient of the constructions of Khovanov--Lauda and Rouquier there is a certain family
of algebras, nowadays known as KLR algebras, that are constructed using actions of symmetric groups on polynomial spaces.

It turns out that in type $A$ the KLR algebras are closely related to affine Hecke algebras.
It was proved by Rouquier~\cite[Section 3.2]{R1} that KLR algebras of type $A$ become isomorphic to
affine Hecke algebras after a suitable localization of both algebras.
Independently, Brundan and Klesh\-chev \cite{BK} have proved a similar result for cyclotomic quotient algebras.
This endows cyclotomic Hecke algebras with a presentation as graded idempotented algebras.
In particular, in the case of KLR for the quiver of type $A_\infty$,
the isomorphism to the group algebra of the symmetric group in $d$ letters $\Bbbk\Sy_d$
gives the latter a graded presentation.
The grading on $\Bbbk\Sy_d$ was already known to exist (see~\cite{R2}) but transporting the grading from the
KLR algebras allowed to construct it explicitly.
This gave rise to a new approach to the representation theory of
symmetric groups and Hecke algebras~\cite{BK2}.
These results are valid over an arbitrary field $\Bbbk$.

The BKR (Brundan--Kleshchev--Rouquier) isomorphism was later extended to isomorphisms between families of other KLR-like algebras and Hecke-like algebras. A similar isomorphism between the Dipper--James--Mathas cyclotomic $q$-Schur algebra and the cyclotomic quiver Schur algebra is given in \cite{StroppelWebster}. The authors of~\cite{MaksimauStroppel} and~\cite{webster-Hecke} have constructed a higher level version of the affine Hecke algebra and have proved that after completion they are isomorphic to a~completion of Webster's tensor product algebras~\cite{webster-tensor}. A weighted version of this isomorphism is also given in~\cite{webster-Hecke}. A similar relation between quiver Schur algebras and affine Schur algebras is given in~\cite{MiemietzStroppel}. Also in~\cite{MaksimauStroppel} the authors have constructed a higher level version of the affine Schur algebra and have proved that after completion it is isomorphic to a completion of the higher level quiver Schur algebras.

The BKR isomorphism was also generalized to other algebras. For example,
in~\cite{Rostam} it is used to show that cyclotomic Yokonuma--Hecke algebras are particular
cases of cyclotomic KLR algebras for certain
cyclic quivers, and in~\cite{dannecy-walker} the BKR isomorphism is extended to connect affine Hecke algebras of
type~$B$ and a generalization of KLR algebras for a Weyl group of type~$B$.

Motivated by the work of Khovanov--Lauda~\cite{KL1,KL2}, Rouquier~\cite{R1},
and Kang--Kashiwara~\cite{KangKashiwara}, the second author and Naisse introduced in~\cite{naissevaz3} a family of KLR-like DG-algebras.
{These are referred to as DG-enhanced KLR algebras'' because they are obtained from free resolutions of cyclotomic KLR algebras
over (non-cyclotomic) KLR algebras, where the cyclotomic condition is in some sense replaced by a differential.}
The algebras underlying these DG-algebras also provide categorification of universal Verma modules.

It seems natural to ask the following questions.
\begin{Questions}\quad
\begin{itemize}\itemsep=0pt
 \item[$(a)$] Are there DG-enhanced versions of affine Hecke algebras that are {free resolutions of cyclotomic Hecke algebras over affine Hecke algebras?}
 \item[$(b)$] In this case, does the BKR isomorphism extend to an isomorphism between (completions of)
 DG-enhanced versions of KLR algebras and DG-enhanced versions of Hecke algebras?
 \end{itemize}
\end{Questions}

In this article, we answer these questions affirmatively.

\begin{rem}
In this paper, we work with two versions of affine Hecke algebras, usual affine Hecke algebra, which is an affinization of the Hecke algebra for the symmetric group, and its degenerate version.
We slightly simplify the terminology and refer to these algebras as the \emph{$q$-affine Hecke algebra},
and the \emph{degenerate affine Hecke algebra.}
In fact, our ``affine'' always means ``extended affine''.
\end{rem}

Let us give an overview of our Hecke algebras and the main results in this article.
Fix~$d\in\bN$ (where $0\in\bN$) and a field $\Bbbk$ that for simplicity we consider to be algebraically closed.
We consider the $\bZ$-graded algebra $\widebar{\cH}_d$ generated by $T_1,\dots ,T_{d-1}$ and $X_1,\dots ,X_d$ in degree zero and~$\theta$ in degree~1.
The generators $T_1,\dots ,T_{d-1}$ and $X_1,\dots ,X_d$ satisfy the relations of the degenerate affine Hecke algebra $\widebar{H}_d$. The generator $\theta$ commutes with the $X_r$'s and with $T_2,\dots,T_{d-1}$
and satisfies 
$\theta^2=0$ and
$T_1\theta T_1\theta + \theta T_1\theta T_1 = 0$. This implies that the subalgebra of
$\widebar{\cH}_d$ concentrated in degree zero is isomorphic to $\widebar{H}_d$.
For $\bfQ=(Q_1,\dots ,Q_\ell)\in\Bbbk^\ell$, we
introduce a differential $\partial_\bfQ$ 
by declaring that it acts as zero on $\widebar{H}_d$ while
\smash{$\partial_\bfQ(\theta) = \prod_{r=1}^\ell(X_1-Q_r)$}.
We denote by \smash{$\widehat{\bar{\cH}}_\bfa$} the completion of the algebra $\widebar{\cH}_d$ at a sequence of ideals depending on $\bfa\in\Bbbk^d$.

In order to make the connection to DG-enhanced versions of KLR algebras we
consider a~quiver $\Gamma$ with a vertex set $I\subseteq\Bbbk$
and with an edge $i\to j$ iff $j+1=i$. We assume that~$Q_r\in I$ for each $r$.
We fix $\bfa\in I^d$ and we set $\nu$
and $\Lambda$ such that $\nu_i$ and $\Lambda_i$ are the multiplicities of $i$ in respectively $\bfa$ and $\bfQ$.
We have $\prod_{r=1}^\ell(X_1-Q_r)=\prod_{i\in I}(X_1-i)^{\Lambda_i}$.
Let $(\cR(\nu),d_\Lambda)$ be the DG-enhanced version of the KLR algebra of type $\Gamma$ with parameters $\nu$ and $\Lambda$ as above and
$\big(\widehat{\cR}(\nu),d_\Lambda\big)$ its completion.

The first main result in this article is a DG-enhanced version of the BKR isomorphism for the
degenerate affine Hecke algebra:

\begin{Th*}
There is an isomorphism of DG-algebras \smash{$\big(\widehat \cR(\nu),d_\Lambda\big)\simeq \big(\widehat {\bar \cH}_\bfa,\partial_\bfQ\big)$}.
\end{Th*}

There is a similar construction for the affine $q$-Hecke algebra, which we do in~Section~\ref{sec:qversion} and Section~\ref{sec:isoqversion}. Fix $q\in\Bbbk$, $q\neq 0,1$ and denote by $({\cH}_d,\partial_\bfQ)$ and by $\big(\widehat {\cH}_\bfa,\partial_\bfQ\big)$ the DG-enhanced version of the affine $q$-Hecke and its completion. The construction of $\cH_d$ also adds a variable $\theta$ in degree 1 that also satisfies $\theta^2=0$ and commutes with all generators but $T_1$ the relation being $ T_1\theta T_1\theta + \theta T_1\theta T_1 =(q-1)\theta T_1 \theta$.

In a nutshell, fix $\bfQ=(Q_1,\ldots,Q_\ell)\in (\Bbbk^\times)^\ell$. We
consider a quiver $\Gamma$ with a vertex set $I\subseteq\Bbbk^\times$
and with an edge $i\to j$ iff $qj=i$.
We assume that $I$ contains $Q_1,\ldots,Q_\ell$ and fix $\bfa\in I^d$. We define $\nu$
and $\Lambda$ in the same way as above.
Let $(\cR(\nu),d_\Lambda)$ be the DG-enhanced version of the KLR algebra of type $\Gamma$ with $\nu$ and $\Lambda$ as above and let
$\big(\widehat{\cR}(\nu),d_\Lambda\big)$ be its completion.
The second main result in this article is the DG-enhanced version of the BKR isomorphism for the affine $q$-Hecke algebra:

\begin{Th**}
There is an isomorphism of DG-algebras $\big(\widehat \cR(\nu),d_\Lambda\big)\simeq \big(\widehat {\cH}_\bfa,\partial_\bfQ\big)$.
\end{Th**}

 The two main results above imply that we have a family of isomorphisms $\widehat \cR(\nu)\simeq \widehat {\cH}_\bfa$ between the underlying algebras parameterized by integral dominant weights.

The DG-enhanced versions of BKR isomorphisms above allow us to compute the homology of the DG-algebras $\bar\cH_d$ and $\cH_d$ in the following way. It is already proved in \cite[Proposition~4.14]{naissevaz3} that the homology of the DG-algebra $(\cR(\nu),d_\Lambda)$ is concentrated in degree $0$ and is isomorphic to the cyclotomic KLR algebra. The most difficult part of this proof is to show that the homology is concentrated in degree zero. The proof of this fact is quite technical and there is no obvious way to rewrite it for Hecke algebras. So we use the following strategy: we deduce the statement for Hecke algebras from the statement for KLR algebras using the DG-enhanced version of the BKR isomorphism.

As a corollary of Theorems~\ref{thm:isoEDHecke} and~\ref{thm:isoEAHecke} and \cite[Proposition 4.14]{naissevaz3}, the DG-algebras $\big(\widebar{\cH}_d,\partial_\bfQ\big)$ and $(\cH_d,\partial_\bfQ)$ are resolutions of the cyclotomic Hecke algebras $\widebar{H}_d^\bfQ$ and $H_d^\bfQ$. These are cyclotomic quotients of the degenerate affine Hecke algebras and of the affine $q$-Hecke algebras, respectively.

\begin{Prop*}
The homology of the DG-algebra $\big(\widebar{\cH}_d,\partial_\bfQ\big)$ is concentrated in degree $0$ and is isomorphic to \smash{$\widebar{H}_d^\bfQ$}.
\end{Prop*}

\begin{Prop**}
The homology of the DG-algebra $(\cH_d,\partial_\bfQ)$ is concentrated in degree $0$ and is isomorphic to \smash{$H_d^\bfQ$}.
\end{Prop**}

To our knowledge, the DG-enhanced versions of Hecke algebras we introduce are new.
We would also like to emphasize the fact that the algebras $\bar{\cH}_d$ and ${\cH}_d$ have triangular decompositions
(see Remarks~\ref{rem:triang-dec-degen} and \ref{rem:triang-dec-q}). This looks like an analogy with the triangular decomposition in the Cherednik algebras, see also Remark \ref{rem:enh-Hecke-vs-DAHA}.

\subsubsection*{Plan of the paper}

In~Section~\ref{sec:EnhancedHecke}, we introduce DG-enhanced versions of the degenerate affine Hecke algebra and of the affine $q$-Hecke algebra and their completions, that will be used in the BKR
isomorphism. The material in this section is new.

In~Section~\ref{sec:KLRG}, we review the DG-enhanced version of the KLR algebra introduced in~\cite{naissevaz3}.
We give the presentation of this algebra as in~\cite[Corollary 3.16]{naissevaz3}
which is more convenient to us, and present its completion, which is involved in the BKR isomorphism.

Section~\ref{sec:isos} contains the main results.
We first generalize the BKR isomorphism to a class of algebras satisfying some properties.
The most important point is that to have a generalization of the BKR isomorphism we need an isomorphism between the completed polynomial representation of the Hecke-like algebra and the completed polynomial representation of the KLR-like algebra, and this isomorphism must intertwine the action of the symmetric group.
Our main results, Theorems~\ref{thm:isoEDHecke} and~\ref{thm:isoEAHecke}, are then proved by showing
that our DG-enhanced versions of Hecke algebras $\bar\cH_d$ and $\cH_d$ on one side, and
the DG-enhanced versions of KLR algebras~$\cR(\nu)$ on the other side satisfy the properties that are required for them to be isomorphic (after completion).
We then use the DG-enhanced version of the BKR isomorphism and the fact that the DG-algebra~$\cR(\nu)$ is a free resolution of the cyclotomic KLR algebra to show in Corollary~\ref{cor:formal-Hecke} that the algebras $\bar\cH_d$ and $\cH_d$ are free resolutions of the corresponding cyclotomic Hecke algebras.

%
%
\section{DG-enhanced versions of Hecke algebras}\label{sec:EnhancedHecke}

For integers $a$ and $b$ such that $a\leqslant b$ we write $[a;b]=\{a,a+1,\ldots,b-1,b\}$.

\subsection[The polynomial rings Pol\_d and Poll\_d and the rings P\_d and Pl\_d]{The polynomial rings $\boldsymbol{\Pol_d}$ and $\boldsymbol{\Poll_d}$ and the rings $\boldsymbol{P_d}$ and $\boldsymbol{Pl_d}$}\label{ssec:polyrings}
Fix an algebraically closed field $\Bbbk$, $q\in\Bbbk$, $q\ne 0,1$ and $d\in\bN$ once and for all.

\subsubsection[The polynomial rings Pol\_d and Poll\_d]{The polynomial rings $\boldsymbol{\Pol_d}$ and $\boldsymbol{\Poll_d}$}

Set $\Pol_d=\Bbbk[X_1,\ldots,X_d]$.
Let $\Sy_d$ be the symmetric group on $d$ letters, which we view as a Coxeter group
with generators $s_1,\dots ,s_{d-1}$. These correspond to the simple transpositions~$(i\ i{+}1)$, and we use these two descriptions interchangeably throughout. As usual, we let~$\Sy_d$ act from the left on $\Pol_d$ by permuting the variables:
for $w\in\Sy_d$ we have
\( w(X_i) = X_{w(i)} \),
and \( w(fg) = w(f)w(g) \) for $f,g\in\Pol_d$.

Using the $\Sy_d$-action above, one defines the \emph{Demazure operators} $\partial_i$ on $P_d$
for all $1\leq i \leq d-1$ in the usual way, as
\begin{gather}\label{eq:Demazure}
 \partial_i(f) = \frac{f-s_i(f)}{X_i - X_{i+1}}.
\end{gather}

We have
$s_i\partial_i(f)=\partial_i(f)$ and $\partial_i(s_if)=-\partial_i(f)$ for all $i$,
so $\partial_i$ is in fact an operator from $\Pol_d$ to the subring $\Pol_d^{s_i}\subseteq \Pol_d$
of invariants under the transposition $(i\ i{+}1)$.
It is well known that the action of the Demazure operators on $\Pol_d$ satisfy the Leibniz rule
\begin{gather} \label{eq:D-Leibniz}
\partial_i(fg)=\partial_i(f)g+s_i(f)\partial_i(g),
\end{gather}
for all $f,g\in \Pol_d$ and for $1\leq i\leq n-1$, and the relations
\begin{gather}
\partial_i^2 = 0,\qquad
\partial_i\partial_{i+1}\partial_i =
\partial_{i+1}\partial_i\partial_{i+1} ,\label{eq:DemazureR23}
\\
\partial_i\partial_j =\partial_j\partial_i \qquad \text{for }\quad |i-j| > 1,\label{eq:DemazureComm}
\\
X_i\partial_i - \partial_iX_{i+1} = 1,
\qquad
\partial_iX_{i} - X_{i+1}\partial_i = 1.\label{eq:DemazureX}
\end{gather}

Set $\Poll_d=\Bbbk\big[X_1^{\pm 1},\ldots,X_d^{\pm 1}\big]$, which is the localization of $\Pol_d$ obtained by adding the inverses of $X_1,\ldots,X_d$.
Moreover, the $\Sy_d$-action on $\Pol_d$ can be obviously extended to a $\Sy_d$-action on~$\Poll_d$.
This means that the action of the Demazure operators on $\Pol_d$ also extends to operators on
$\Poll_d$ that satisfy the
relations in~\eqref{eq:D-Leibniz} (for $f$ and $g$ in $\Poll_d$)
and~\eqref{eq:DemazureR23}--\eqref{eq:DemazureX}.

\subsubsection[The rings P\_d and Pl\_d]{The rings $\boldsymbol{P_d}$ and $\boldsymbol{Pl_d}$}

Let $\und{\theta}=\{ \theta_1,\dots,\theta_d \}$ be odd variables and
form the supercommutative ring
\[ P_d=\Pol_d \otimes \bV^\bullet(\und{\theta}), \]
where
$\bV^\bullet(\und{\theta})$ is the exterior $\Bbbk$-algebra in the variables $\und{\theta}$.
Here $P_d$ is a subring concentrated in parity zero.
Introduce an additional $\bZ$-grading on $P_d$ denoted $\lambda(\bullet)$ and defined as $\lambda(X_i)=0$ and~$\lambda(\theta_i)=1$. This grading is half the grading $\deg_\lambda$ introduced in~\cite[Section~3.1]{naissevaz1}. If we forget the grading, the algebra $P_d$ is the symmetric algebra corresponding to a superspace of dimension~$(d\vert d)$.

As explained in~\cite[Section~8.3]{naissevaz1}, the action of $\Sy_d$ on $\Pol_d$ extends to an
action on $P_d$ by setting
\begin{equation}\label{eq:SnActsonR}
s_i(\theta_j) = \theta_{j} + \delta_{i,j}(X_i-X_{i+1})\theta_{i+1} .
\end{equation}
{This action respects the grading, as one easily checks, and allows extending the action of the Demazure operators in~\eqref{eq:Demazure} to $P_d$. We denote the extensions of the Demazure operators to $P_d$ by the same symbols.}
Similarly to the operators above, $\partial_i$ is an operator from $P_d$ to the subring $P_d^{s_i}\subseteq P_d$
of invariants under the transposition $(i\ i{+}1)$.
It was proved in~\cite[Lemma 2.2]{naissevaz2} that the Demazure operators
on $P_d$ satisfy the Leibniz rule~\eqref{eq:D-Leibniz} (for $f,g\in P_d$), the relations~\eqref{eq:DemazureR23}--\eqref{eq:DemazureX} and the following relations:
\begin{gather*}
\partial_i\theta_k = \theta_k\partial_i\qquad\text{for }\quad k\neq i,
 \\
 \partial_i (\theta_i- X_{i+1}\theta_{i+1}) = (\theta_i - X_{i+1}\theta_{i+1})\partial_i,
\end{gather*}
for all $i=1,\dots, d-1$.

As in the case of $P_d$ above, we form the supercommutative ring
\[ Pl_d=\Poll_d \otimes \bV^\bullet(\und{\theta}). \]
This ring is also endowed with the grading $\lambda(\bullet)$, which is defined in the same way as in $P_d$.
Moreover, the $\Sy_d$-action on $\Poll_d$ can be obviously extended to a $\Sy_d$-action on $Pl_d$.
This means that the action of the Demazure operators on $\Poll_d$ also extends to operators
on $Pl_d$
that satisfy the
relations in~\eqref{eq:D-Leibniz} (for $f$ and $g$ in $Pl_d$) and~\eqref{eq:DemazureR23}--\eqref{eq:DemazureX}.

\subsection{Degenerate version}

\subsubsection{Degenerate affine Hecke algebra}\label{ssec:DAH}
The \emph{degenerate affine Hecke} algebra $\widebar{H}_d$ is the $\Bbbk$-algebra generated by
$T_1,\dots,T_{d-1}$ and $X_1,\dots,\allowbreak X_d$, with relations 
\begin{gather}
 T_i^2 = 1, \qquad T_iT_j =T_jT_i\quad\text{if}\quad
|i-j|>1,\qquad T_iT_{i+1}T_i = T_{i+1}T_iT_{i+1},
 \label{eq:degHeckeSnrels} \\
X_iX_j = X_jX_i,
 \\
T_iX_{i}-X_{i+1}T_i = -1,\qquad
 T_iX_j = X_jT_i\quad \text{if} \quad j-i\neq 0,1.
 \label{eq:degHeckeXTrels}
 \end{gather}

For $w=s_{i_1}\dots s_{i_k}\in\Sy_d$ a reduced expression, we put $T_w=T_{i_1}\dots T_{i_k}$.
Then $T_w$ is independent of the choice of the reduced expression of $w$ and the
set
\[
\big\{ X_1^{m_1}\dots X_d^{m_d}T_w\big\}_{w\in\Sy_d, m_i\in\bZ_{\geq 0}}
\]
is a basis of the $\Bbbk$-vector space $\widebar{H}_d$.

There is a faithful representation of $\widebar{H}_d$ on $\Pol_d$, where
$T_i(f)=s_i(f)-\partial_i(f)$ and $X_i\in\widebar{H}_d$ acts as multiplication by $X_i$.
It is immediate that $\widebar{H}_d$ contains
$\Bbbk\Sy_d$ and $\Pol_d$ as subalgebras and that for $p\in\Pol_d$,
\[
T_i p - s_i(p)T_i = -\partial_i(p) .
\]

Let $\ell$ be a positive integer and $\bfQ=(Q_1,\dots,Q_\ell)$ be an $\ell$-tuple of elements of the field $\Bbbk$.

\begin{defn}\label{def:cyclDAH}
 The \emph{degenerate cyclotomic Hecke algebra} is the quotient
 \[
 \bar{H}_d^\bfQ = \bar{H}_d/\prod\limits_{r=1}^\ell(X_1-Q_r).
 \]
 \end{defn}

\subsubsection[The algebra bar H\_d]{The algebra $\boldsymbol{\bar\cH_d}$}

\begin{defn}\label{def:enhacedDegH}
Define the algebra $\bar{\cH}_d$ as
the $\Bbbk$-algebra generated by
$T_1,\dots ,T_{d-1}$ and $X_1,\dots , \allowbreak X_d$ in $\lambda$-degree zero,
and an extra generator $\theta$ in $\lambda$-degree~1,
with relations~\eqref{eq:degHeckeSnrels} to~\eqref{eq:degHeckeXTrels} and
\begin{gather*}
\theta^2 = 0,
 \\
 X_r\theta = \theta X_r\qquad \text{for} \quad r=1,\dots, d,
 \\
T_r\theta=\theta T_r \qquad \text{for}\quad r>1,
\\
 T_1\theta T_1\theta + \theta T_1\theta T_1 = 0 .
\end{gather*}
\end{defn}

The algebra $\bar{\cH}_d$ contains the degenerate affine Hecke algebra $\bar{H}_d$ as a subalgebra concentrated
in $\lambda$-degree zero.

\begin{lem}
\label{lem:barcH-acts-Pd}
The algebra $\bar{\cH}_d$ acts on $P_d$ by
\begin{align*}
& T_r(f)= s_r(f) - \partial_r (f) ,
\qquad X_r(f)= X_rf ,
 \qquad \theta(f)= \theta_1f ,
\end{align*}
for all $f\in P_d$ and where $s_r(f)$ and $\partial_r(f)$ are as
in~\eqref{eq:SnActsonR} and~\eqref{eq:Demazure}.
\end{lem}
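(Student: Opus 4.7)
The plan is to prove the lemma in two stages: first verify that the formulas actually define a representation by checking all defining relations of $\bar{\cH}_d$, then establish faithfulness through a PBW-style spanning argument combined with the known faithfulness of the classical action of $\bar{H}_d$ on $\Pol_d$.

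For the first stage, the relations~\eqref{eq:degHeckeSnrels}--\eqref{eq:degHeckeXTrels} among the $T_r$ and $X_r$ are those of the degenerate affine Hecke algebra, whose standard polynomial representation on $\Pol_d$ was already recalled in \fullref{ssec:DAH}. Since $P_d$ is a free $\Pol_d$-module and both the $\Sy_d$-action~\eqref{eq:SnActsonR} and the Demazure operators extend to $P_d$ satisfying the same formal identities, those relations carry over verbatim. The relation $\theta^2=0$ is immediate from $\theta_1^2=0$ in $\bV^\bullet(\und\theta)$; $X_r\theta=\theta X_r$ follows because $X_r$ and $\theta_1$ supercommute in $P_d$; and $T_r\theta=\theta T_r$ for $r>1$ follows from $s_r(\theta_1)=\theta_1$ (by~\eqref{eq:SnActsonR}, since $\delta_{r,1}=0$) together with the commutation $\partial_r\theta_1=\theta_1\partial_r$ listed just below~\eqref{eq:DemazureX}.

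The only substantive computation is the mixed cubic relation $T_1\theta T_1\theta+\theta T_1\theta T_1=0$. First I would establish the operator identity
\[
T_1\theta = \bigl(\theta_1+(X_1-X_2)\theta_2\bigr)T_1 + \theta_2
\]
by applying $T_1\theta$ to an arbitrary $f\in P_d$, using~\eqref{eq:SnActsonR} to compute $s_1(\theta_1 f)$, and using the Leibniz rule~\eqref{eq:D-Leibniz} together with $\partial_1(\theta_1)=-\theta_2$ to compute $\partial_1(\theta_1 f)$. Plugging this identity into both $T_1\theta T_1\theta$ and $\theta T_1\theta T_1$, expanding, and collecting terms: the pieces quadratic in $\theta_1$ or in $\theta_2$ vanish by $\theta_i^2=0$, the mixed $\theta_1\theta_2$ pieces cancel pairwise using $\theta_1\theta_2=-\theta_2\theta_1$, and the pure $T_1$-coefficients cancel by the relation $T_1^2=1$.

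For faithfulness, my plan is to show that $\bar\cH_d$ is spanned by the family $\{X^{\mathbf{m}}T_w\vartheta_J\}$ where $\vartheta_J=\vartheta_{j_1}\cdots\vartheta_{j_k}$ ranges over products of elements $\vartheta_j\in\bar\cH_d$ whose action on $P_d$ realizes exterior multiplication by $\theta_j$ (these are constructed recursively by conjugating $\theta$ with appropriate $T_i$'s and subtracting polynomial correction terms, mimicking the identity above). Suppose $a=\sum_{J,w,\mathbf{m}}c_{J,w,\mathbf{m}}X^{\mathbf{m}}T_w\vartheta_J$ acts as zero on $P_d$. Applying $a$ to $1\in P_d$, the output lies in $P_d=\Pol_d\otimes\bV^\bullet(\und\theta)$, and decomposing by the exterior-monomial basis of $\bV^\bullet(\und\theta)$ the coefficient in front of each $\theta_J$ is an element of $\Pol_d$ equal to $\sum_{w,\mathbf{m}}c_{J,w,\mathbf{m}}X^{\mathbf{m}}T_w(1)$. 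By the faithfulness of the $\bar H_d$-action on $\Pol_d$ this forces all $c_{J,w,\mathbf{m}}$ to vanish for each $J$ in turn, yielding $a=0$.

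The main obstacle is establishing the spanning statement underlying the faithfulness step: one must show that any word in the generators of $\bar\cH_d$ can be brought into the normal form $X^{\mathbf{m}}T_w\vartheta_J$ by repeated use of the defining relations, which amounts to a PBW-type theorem for $\bar\cH_d$. The key to controlling this rewriting is the identity for $T_1\theta$ derived above, which permits moving $\theta$ past any $T_w$ at the cost of introducing auxiliary exterior generators $\vartheta_j$, together with $\theta^2=0$ and the cubic relation to bound the length of the exterior part.
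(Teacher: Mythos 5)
Your verification of the defining relations is correct and is all the paper means by ``a straightforward computation'': the operator identity $T_1\theta=\bigl(\theta_1+(X_1-X_2)\theta_2\bigr)T_1+\theta_2$ does follow from $s_1(\theta_1)=\theta_1+(X_1-X_2)\theta_2$, $\partial_1(\theta_1)=-\theta_2$ and the Leibniz rule, and substituting it into $T_1\theta T_1\theta+\theta T_1\theta T_1$ gives $0$ after using $\theta_i^2=0$, $\theta_1\theta_2=-\theta_2\theta_1$ and $T_1^2=1$.

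The faithfulness argument, however, has a genuine gap: the elements $\vartheta_j\in\bar{\cH}_d$ that you postulate do not exist for $j\geq 2$. Your own identity shows that the natural candidate $\xi_2=T_1\theta T_1$ acts on $P_d$ as $\theta_1+(X_1-X_2+T_1)\theta_2$ (using that $T_1$ commutes with $\theta_2$), so the correction term is $\theta_1$, not a polynomial, and extracting pure multiplication by $\theta_2$ would require inverting $X_1-X_2+T_1$; a short computation gives $(X_1-X_2+T_1)^2=(X_1-X_2)^2-1$, which is not invertible in $\bar{H}_d$, so no polynomial correction can diagonalize this. Two further steps would fail even if such $\vartheta_j$ existed: since $T_w$ acts on $\theta_J f$ through \eqref{eq:SnActsonR} and the Leibniz rule, it mixes the exterior monomials, so the coefficient of $\theta_J$ in $a(1)$ is \emph{not} $\sum_{w,\mathbf m}c_{J,w,\mathbf m}X^{\mathbf m}T_w(1)$; and evaluating only at $1\in P_d$ gives $T_w(1)=1$, so you would at best conclude $\sum_w c_{J,w,\mathbf m}=0$, which does not force the coefficients to vanish --- faithfulness of $\bar{H}_d$ on $\Pol_d$ requires testing against all of $\Pol_d$. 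The paper repairs exactly these points by replacing ``diagonal'' with ``triangular'': \fullref{lem:xi-acts} shows that $\xi^{\bfb}$ acts as $h_{\bfb}\theta^{\bfb}+\sum_{\bfb'<\bfb}h_{\bfb'}\theta^{\bfb'}$ with $h_{\bfb}$ not a zero divisor, and \fullref{lem:theta-free} shows the operators $\theta^{\bfb}$ are linearly independent over $\bar{H}_d$ by multiplying with the complementary monomial $\theta^{\overline{\bfb}}$ and then invoking faithfulness of $\bar{H}_d$ on $\Pol_d$; combined with the spanning statement (which is the part of your argument that is essentially right, and is \fullref{prop:basis-daH}), this yields both the basis and faithfulness.
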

\begin{proof}
 The defining relations of $\bar{\cH}_d$ can be checked by a straightforward computation.
\end{proof}

Define $\xi_1,\dots , \xi_d\in \bar{\cH}_d$ by the rules $\xi_1=\theta$, $\xi_{i+1}=T_i\xi_iT_i$.
The following is straightforward.

\begin{lem}\label{lem:xisDegH}
 The elements $\xi_r$ satisfy for all $r\in\{1,\dots ,d-1\}$ and all $\ell\in\{1,\dots, d\}$,
 \[
\xi_\ell^2=0,\qquad \xi_r\xi_\ell + \xi_\ell\xi_r = 0,\qquad T_r\xi_\ell = \xi_{s_r(\ell)} T_r.
 \]
\end{lem}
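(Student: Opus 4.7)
The plan is to first prove the conjugation relation $T_r\xi_\ell=\xi_{s_r(\ell)}T_r$, and then deduce both $\xi_\ell^2=0$ and the anticommutation from it. Each step will be established by induction on $\ell$, using only the defining relations of $\bar{\cH}_d$ together with the recursion $\xi_{i+1}=T_i\xi_iT_i$ and $T_i^2=1$.

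For the conjugation formula with $\ell=1$, the defining relation $T_r\theta=\theta T_r$ covers $r>1$, while for $r=1$ one observes that $\xi_2 T_1=T_1\theta T_1\cdot T_1=T_1\theta=T_1\xi_1$. For the inductive step, I write $\xi_\ell=T_{\ell-1}\xi_{\ell-1}T_{\ell-1}$ and split into cases. The cases $r=\ell-1$ and $r=\ell$ are immediate using $T_{\ell-1}^2=T_\ell^2=1$ together with $s_{\ell-1}(\ell)=\ell-1$ and $s_\ell(\ell)=\ell+1$. When $|r-(\ell-1)|\geq 2$, i.e.\ $r\geq \ell+1$ or $r\leq\ell-3$, the generator $T_r$ commutes with $T_{\ell-1}$ and, by the inductive hypothesis (since $s_r$ fixes $\ell-1$), with $\xi_{\ell-1}$, so $T_r\xi_\ell=\xi_\ell T_r=\xi_{s_r(\ell)}T_r$. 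The remaining and most delicate case is $r=\ell-2$, since then $T_r$ does not commute with $T_{\ell-1}$; this is the main obstacle.

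To handle $r=\ell-2$, the idea is to expand $\xi_{\ell-1}=T_{\ell-2}\xi_{\ell-2}T_{\ell-2}$ and compute
\begin{align*}
T_{\ell-2}\xi_\ell
&=T_{\ell-2}T_{\ell-1}T_{\ell-2}\,\xi_{\ell-2}\,T_{\ell-2}T_{\ell-1}\\
&=T_{\ell-1}T_{\ell-2}T_{\ell-1}\,\xi_{\ell-2}\,T_{\ell-2}T_{\ell-1}\\
&=T_{\ell-1}T_{\ell-2}\,\xi_{\ell-2}\,T_{\ell-1}T_{\ell-2}T_{\ell-1}\\
&=T_{\ell-1}T_{\ell-2}\,\xi_{\ell-2}\,T_{\ell-2}T_{\ell-1}T_{\ell-2}
=T_{\ell-1}\xi_{\ell-1}T_{\ell-1}\,T_{\ell-2}=\xi_\ell T_{\ell-2},
\end{align*}
where the second and fourth equalities use the braid relation $T_{\ell-2}T_{\ell-1}T_{\ell-2}=T_{\ell-1}T_{\ell-2}T_{\ell-1}$, and the third uses the inductive hypothesis $T_{\ell-1}\xi_{\ell-2}=\xi_{\ell-2}T_{\ell-1}$ (valid since $s_{\ell-1}$ fixes $\ell-2$). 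Since $s_{\ell-2}(\ell)=\ell$, this matches the claimed formula.

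With the conjugation formula established, $\xi_\ell^2=0$ follows by induction on $\ell$: the base case $\xi_1^2=\theta^2=0$ is a defining relation, and $\xi_{\ell+1}^2=T_\ell\xi_\ell T_\ell T_\ell\xi_\ell T_\ell=T_\ell\xi_\ell^2T_\ell=0$. Finally, the anticommutation $\xi_r\xi_\ell+\xi_\ell\xi_r=0$ for $r\neq\ell$ has as its base case the defining relation $T_1\theta T_1\theta+\theta T_1\theta T_1=0$, which reads $\xi_2\xi_1+\xi_1\xi_2=0$. The conjugation formula together with $T_s^2=1$ gives $T_s(\xi_r\xi_\ell+\xi_\ell\xi_r)T_s=\xi_{s_s(r)}\xi_{s_s(\ell)}+\xi_{s_s(\ell)}\xi_{s_s(r)}$, so the vanishing propagates along the transitive action of $\Sy_d$ on unordered pairs of distinct indices in $\{1,\dots,d\}$, yielding the identity for every pair.
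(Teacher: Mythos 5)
Your proof is correct and complete. The paper offers no argument for this lemma (it is simply declared ``straightforward''), and your induction on $\ell$ for the conjugation formula --- with the only genuinely delicate point, the case $r=\ell-2$, handled via the braid relation and the inductive hypothesis $T_{\ell-1}\xi_{\ell-2}=\xi_{\ell-2}T_{\ell-1}$ --- together with the transitivity argument for the anticommutation, is precisely the verification the authors leave to the reader.
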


\begin{rem}
\label{rem:enh-Hecke-vs-DAHA}
It is easy to give the relations between $T$'s and $X$'s and between $T$'s and $\xi$'s. However, $X$'s and $\xi$'s satisfy more elaborate relations, which is similar to what happens with two polynomial rings in Cherednik (double affine Hecke) algebras. For example, the following commutation relations can be checked easily:
\begin{gather*}
[X_r,\xi_1]=0 ,\qquad
[X_1,\xi_2]=-[X_2,\xi_2]=[\xi_2,T_1]=[T_1,\xi_1] ,
\qquad
[X_1,\xi_3]=T_2[T_1,\xi_1]T_2 .
\end{gather*}
\end{rem}

Abusing the notation, we will write $\theta_r$ for the operator on $P_d$ that multiplies each element of~$P_d$ by $\theta_r$.
Set $M=\{0,1\}^d$. Denote by $\mathbf{1}$ the sequence $\mathbf{1}=(1,1,\ldots,1)\in M$.
For each sequence~$\bfb=(b_1,\ldots,b_d)\in M$, we set $\theta^\bfb=\theta_1^{b_1}\dots \theta_d^{b_d}$. For each $\bfb\in M$, we set $\overline\bfb=\mathbf{1}-\bfb$. In particular, we have \smash{$\theta^\bfb\cdot\theta^{\overline \bfb}=\pm \theta_1\theta_2\dots \theta_d=\pm\theta^{\mathbf 1}$}. Set also $|\bfb|=b_1+b_2+\dots+b_d$.

\begin{lem}
\label{lem:theta-free}
The operators $\big\{\theta^\bfb\mid \bfb\in M\big\}$ acting on $P_d$ are linearly independent over $\bar{H}_d$. More precisely, if we have $\sum_{\bfb\in M}h_\bfb\theta^\bfb=0$ with $h_\bfb\in \bar{H}_d$, then we have $h_\bfb=0$ for each $\bfb\in M$.
\end{lem}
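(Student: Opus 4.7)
The plan is to isolate each coefficient $h_{\bfb_0}$ by composing the assumed vanishing relation with a carefully chosen $\bar{H}_d$-equivariant projection out of $P_d$, and then use the faithfulness of $\bar{H}_d$ on $\Pol_d$.

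First I would introduce the ``top form'' projection $\pi \colon P_d \to \Pol_d$ defined as the $\Pol_d$-linear map that sends $f\theta^{\mathbf{1}} \mapsto f$ and $f \theta^{\bfc} \mapsto 0$ for every $\bfc \in M$ with $\bfc \neq \mathbf{1}$. The crucial technical step is to verify that $\pi$ is $\bar{H}_d$-equivariant. Equivariance with respect to the $X_r$ is immediate because the $X_r$ act by multiplication. For the $T_r$ one uses $T_r = s_r - \partial_r$ and the formulas~\eqref{eq:SnActsonR} and~\eqref{eq:Demazure}: since $s_r(\theta^\mathbf{1}) = \theta^\mathbf{1}$ (the would-be correction factor $(X_r-X_{r+1})\theta_{r+1}$ is killed by the factor $\theta_{r+1}$ already present), and for $\bfb \ne \mathbf 1$ the elements $s_r(\theta^\bfb)$ and $\partial_r(f\theta^\bfb)$ lie in $\Pol_d\theta^\bfb + \Pol_d\theta^{s_r(\bfb)}$, neither of which has a $\theta^\mathbf{1}$ component. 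Thus $\pi \circ s_r = s_r \circ \pi$ and $\pi \circ \partial_r = \partial_r \circ \pi$, hence $\pi \circ T_r = T_r \circ \pi$.

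With equivariance in hand, fix $\bfb_0 \in M$ and apply the assumed relation $\sum_\bfb h_\bfb \theta^\bfb = 0$ to the element $f \theta^{\overline{\bfb_0}}$, where $f \in \Pol_d$ is arbitrary. Since $X$'s commute with $\theta$'s one obtains $\sum_\bfb h_\bfb\bigl(f\,\theta^\bfb \theta^{\overline{\bfb_0}}\bigr) = 0$. Now $\theta^\bfb \theta^{\overline{\bfb_0}} = 0$ unless the supports of $\bfb$ and $\overline{\bfb_0}$ are disjoint, i.e.\ $\bfb \leq \bfb_0$ componentwise, in which case it equals $\pm \theta^{\bfb + \overline{\bfb_0}}$; this monomial equals $\theta^{\mathbf 1}$ iff $\bfb = \bfb_0$, and otherwise has support strictly smaller than $\{1,\dots,d\}$. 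Applying $\pi$ and using equivariance,
\[
0 \;=\; \pi\Bigl(\sum_\bfb h_\bfb\bigl(f\,\theta^\bfb \theta^{\overline{\bfb_0}}\bigr)\Bigr) \;=\; \sum_\bfb h_\bfb\bigl(\pi(f\,\theta^\bfb \theta^{\overline{\bfb_0}})\bigr) \;=\; \pm\, h_{\bfb_0}(f).
\]
Since $f \in \Pol_d$ was arbitrary, $h_{\bfb_0}$ annihilates the polynomial representation of $\bar{H}_d$. Faithfulness of this representation (recalled in~\fullref{ssec:DAH}) then forces $h_{\bfb_0} = 0$, and since $\bfb_0$ was arbitrary we conclude.

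The main obstacle is the $\bar H_d$-equivariance of $\pi$: one must check that the non-trivial action of $s_r$ and $\partial_r$ on the exterior generators cannot produce a $\theta^{\mathbf 1}$ term out of any $\theta^\bfb$ with $\bfb \neq \mathbf 1$. Once this case analysis is done, the rest is a clean application of the pairing between $\theta^\bfb$ and $\theta^{\overline{\bfb_0}}$ inside the top exterior power.
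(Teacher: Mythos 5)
Your proof is correct and follows essentially the same route as the paper: both arguments isolate $h_{\bfb_0}$ by pairing the relation against $\theta^{\overline{\bfb_0}}$, reading off the coefficient of the top form $\theta^{\mathbf 1}$ (which is legitimate because the $\bar H_d$-action interacts with $\theta^{\mathbf 1}$ in the controlled way you verify), and then invoking faithfulness of $\Pol_d$ over $\bar H_d$. The only difference is bookkeeping: where you kill the cross terms $\bfb<\bfb_0$ with the equivariant projection $\pi$, the paper instead picks $\bfb_0$ with $|\bfb_0|$ minimal among the nonzero coefficients so that those terms vanish outright.
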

\begin{proof}
Let $H=\sum_{\bfb\in M}h_\bfb\theta^\bfb$ be an operator that acts by zero. Assume that $H$ has a nonzero coefficient. Let $\bfb_0$ be such that $h_{\bfb_0}\ne 0$ and such that $|\bfb_0|$ is minimal with this property. Then for each element $P\in P_d$, we have \smash{$H\big(\theta^{\overline{\bfb_0}}P\big)=\pm h_{\bfb_0}\theta^{\mathbf 1}P$}. This shows that $h_{\bfb_0}$ acts by zero on~$\theta^{\mathbf 1}P_d=\theta^{\mathbf 1}\Pol_d$. But this implies $h_{\bfb_0}=0$ because the polynomial representation of $\bar{H}_d$ on~$\Pol_d$ is faithful, see \cite[Section 3.1.2]{R1}.
\end{proof}

For each, $k\in\{0,1,\ldots,d\}$ we denote by \smash{$\bar{\cH}^{\leqslant k}_d$} the subalgebra of the algebra of operators on~$P_d$ generated by $X_i$, $\theta_i$ for $i\leqslant k$ and $T_r$ for $r<k$. Denote also by \smash{$\bar{H}^{\leqslant k}_d$} the subalgebra of $\bar{H}_d$ generated by $X_i$ for $i\leqslant k$ and $T_r$ for $r<k$. Since $\bar{H}_d$ acts faithfully on $P_d$, we can see \smash{$\bar{H}^{\leqslant k}_d$} as a subalgebra of \smash{$\bar{\cH}^{\leqslant k}_d$}. We mean that for $k=0$ we have \smash{$\bar{\cH}^{\leqslant 0}_d=\bar{H}^{\leqslant 0}_d=\Bbbk$}. The $\lambda$-grading on $P_d$ induces a grading on \smash{$\bar{\cH}^{\leqslant k}_d$} that we also call $\lambda$-grading.
\begin{lem}
\label{lem:basis-theta-trunc}
The set
\[
 \big\{ X_1^{a_1}\dots X_k^{a_k}T_w \theta_1^{b_1}\dots \theta_k^{b_k} \mid w\in\Sy_k,\, (a_1,\dots, a_k) \in\bN^k , \, (b_1,\dots, b_k)
 \in\{0,1\}^k \big\} ,
\]
is a basis of the $\Bbbk$-vector space $\bar{\cH}^{\leqslant k}_d$.
\end{lem}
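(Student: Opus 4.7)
The approach mirrors the proof of \fullref{prop:basis-daH}, treating spanning and linear independence separately; all computations take place inside the algebra of operators on $P_d$. I would first record the needed commutation relations among the generators. Those internal to $\bar{H}_d$ (among the $X_i$ and $T_r$) and the exterior relations $X_i\theta_j=\theta_jX_i$, $\theta_i\theta_j+\theta_j\theta_i=0$, $\theta_i^2=0$ are immediate, since we are composing multiplication operators and using the usual action of $\bar{H}_d$ on $P_d$. The essential new ingredient is the interaction between $T_r=s_r-\partial_r$ and multiplication by $\theta_j$. From the general identity $T_r\circ(\text{multiplication by }\varphi)=(\text{multiplication by }s_r(\varphi))\circ T_r - (\text{multiplication by }\partial_r(\varphi))$, valid for every $\varphi\in P_d$, together with $s_r(\theta_j)=\theta_j+\delta_{r,j}(X_r-X_{r+1})\theta_{r+1}$ and the computation $\partial_r(\theta_j)=-\delta_{r,j}\theta_{r+1}$ (extracted from $\partial_r(\theta_r-X_{r+1}\theta_{r+1})=(\theta_r-X_{r+1}\theta_{r+1})\partial_r$ via the Leibniz rule), one obtains $T_r\theta_j=\theta_jT_r$ for $j\ne r$ and
\[
T_r\theta_r = \theta_rT_r+(X_r-X_{r+1})\theta_{r+1}T_r+\theta_{r+1}.
\]

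For spanning, I would start from an arbitrary word in the generators, first push all $X_i$'s to the left via the relations between the $T$'s and the $X$'s (generating shorter-$T$-length corrections in the standard way), next reduce the $T$-part to some $T_w$ using braid and commutation relations, and finally push the $\theta_j$'s to the right. The exterior relations force each $b_i\in\{0,1\}$. The delicate point is that moving $\theta_r$ rightward past $T_r$ via the displayed relation above also produces a term with $\theta_{r+1}$ of strictly shorter $T$-length, and possibly of larger index; inducting on the pair (Coxeter length of the $T$-part, number of $\theta$-factors still to the left of the $T$-part), ordered lexicographically with Coxeter length dominant, guarantees termination.

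For linear independence, suppose $\sum_{\bfa,w,\bfb}c_{\bfa,w,\bfb}X^{\bfa}T_w\theta^{\bfb}=0$ as operators on $P_d$. Grouping by $\bfb$ yields $\sum_\bfb h_\bfb\theta^\bfb=0$ with $h_\bfb:=\sum_{\bfa,w}c_{\bfa,w,\bfb}X^{\bfa}T_w\in\bar{H}_d$. Viewing $\{0,1\}^k\hookrightarrow M=\{0,1\}^d$ by zero-padding, \fullref{lem:theta-free} forces $h_\bfb=0$ for every $\bfb$; since $\{X^{\bfa}T_w:\bfa\in\bN^k,\,w\in\Sy_k\}$ sits inside the standard PBW basis of $\bar{H}_d$, each $c_{\bfa,w,\bfb}$ vanishes. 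The main obstacle is the bookkeeping in the spanning step, specifically ensuring that the $\theta_{r+1}$ corrections of shorter $T$-length do not cause non-termination; once this filtration is set up, linear independence falls out cleanly from \fullref{lem:theta-free}.
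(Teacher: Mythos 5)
Your proof is correct and takes essentially the same route as the paper, which simply declares spanning to be clear and deduces linear independence from \fullref{lem:theme-free}\let\relax\relax\fullref{lem:theta-free} together with the PBW basis of $\bar{H}_d$ --- exactly your argument. The one imprecision is your termination measure in the spanning step: the correction term $(X_r-X_{r+1})\theta_{r+1}T_r$ has the same Coxeter length and the same number of $\theta$'s to the left of the $T$-part, but it is disposed of immediately by the commutation $T_r\theta_{r+1}=\theta_{r+1}T_r$ that you have already established.
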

\begin{proof}
It is clear that the given set spans. Linear independence follows from Lemma~\ref{lem:theta-free}.
\end{proof}

Similarly to the notation $\theta^\bfb$ above, we set $\xi^\bfb=\xi_1^{b_1}\dots \xi_d^{b_d}$. For two elements $\bfb,\bfb'\in M$, we write $\bfb'<\bfb$ if there is an index $r\in[1;d]$ such that $b'_r<b_r$ and $b'_{t}= b_t$ for $t>r$.
For $\bfb\in M$, write $\max(\bfb)$ for the maximal index $r\in [1;d]$ such that $b_r=1$.

\begin{lem}
\label{lem:xi-k-acts}
The element $\xi_k$ acts on $P_d$ by an operator of the form $c_k+d_k\theta_k$, where $c_k\in \bar{\cH}^{\leqslant k-1}_d$, \smash{$d_k\in \bar{H}^{\leqslant k-1}_d$}, $\lambda(c_k)=1$ and $d_k$ is not a right zero divisor in $\bar{H}_d$.
\end{lem}
\begin{proof}
We prove by induction on $k$. The case $k=1$ is trivial.
Now, assume that $d_k$ is not a~right zero divisor and let us show that $d_{k+1}$ is not a~right zero divisor. Since we have
\begin{align*}
T_{k}d_{k}\theta_{k}T_{k} &= T_{k}d_{k}T_{k}(\theta_{k}+(X_{k}-X_{k+1})\theta_{k+1})+T_{k}d_{k}\theta_{k+1}
\\
&= T_{k}d_{k}T_{k}\theta_{k}+(T_{k}d_{k}T_{k}(X_{k}-X_{k+1})+T_{k}d_{k})\theta_{k+1},
\end{align*}
we get
\begin{align*}
 d_{k+1} &= T_{k}d_{k}T_{k}(X_{k}-X_{k+1})+T_{k}d_{k} =T_{k}d_{k}(T_{k}(X_{k}-X_{k+1})+1) \\ &= T_{k}d_{k}((X_{k+1}-X_{k})T_{k}-1).
\end{align*}
It is enough to check that the element $((X_{k+1}-X_{k})T_{k}-1)$ is not a right zero divisor. This follows from the fact that it acts on $P_d$ by the operator $(X_{k+1}-X_{k}-1)s_k$.
\end{proof}

\begin{lem}
\label{lem:xi-b-acts}
The element $\xi^\bfb\in\bar{\cH}_d$ acts on $P_d$ by an operator of the form $c_\bfb+d_{\bfb}\theta^{\bfb}$, where \smash{$d_\bfb\in \bar{H}_d^{\leqslant\max(\bfb)-1}$} and $d_\bfb$ is not a right zero divisor in $\bar{H}_d$ and $c_{\bfb}$ is of the form $\sum_{\bfb'<\bfb}h_{\bfb'}\theta^{\bfb'}$ with \smash{$h_{\bfb'}\in \bar{H}_d^{\leqslant\max(\bfb)-1}$}.
\end{lem}
\begin{proof}
We prove the statement by induction on $|\bfb|=r$. The case $r=1$ follows immediately from the lemma above. Now, for $r>1$, assume that the statement is true for $r-1$, let us prove it for $r$.

Set $p=\max(\bfb)$. Let $\bfb_1\in M$ be such that $\theta^\bfb=\theta^{\bfb_1}\theta_{p}$. By the induction assumption, the element $\xi^\bfb=\xi^{\bfb_1}\xi_{p}$ acts on $P_d$ by an operator of the form (up to sign) $(c_p+d_p\theta_p)\big(c_{\bfb_1}+d_{\bfb_1}\theta^{\bfb_1}\big)$. This operator can be written as $c_\bfb+d_\bfb\theta^\bfb$ for $d_\bfb=d_pd_{\bfb_1}$ and $c_\bfb=c_p(c_{\bfb_1}+d_{\bfb_1}\theta^{\bfb_1})+d_p\theta_pc_{\bfb_1}$. Now, we obviously get $d_\bfb\in \bar{H}_d^{\leqslant p-1}$ because it is a product of two elements of $\bar{H}_d^{\leqslant p-1}$ and it is not a right zero divisor as a product of two right non-zero divisors. Moreover, the element~$c_\bfb$ is of the form $\sum_{\bfb'<\bfb}h_{\bfb'}\theta^{\bfb'}$ because $d_p\theta_pc_{\bfb_1}=d_pc_{\bfb_1}\theta_p$ is of the required form and because~\smash{$c_p(c_{\bfb_1}+d_{\bfb_1}\theta^{\bfb_1})\in \bar{\cH}_d^{\leqslant p-1}$} (and then it is also of the required form).
\end{proof}

It is not hard to write a basis of $\bar{\cH}_d$ in terms of the $\xi_r$'s.
\begin{prop} \label{prop:basis-daH}
The set
\begin{gather*}
 \big\{ X_1^{a_1}\dots X_d^{a_d}T_w \xi_1^{b_1}\dots \xi_d^{b_d} \mid w\in\Sy_d,\, (a_1,\dots, a_d) \in\bN^d , \, (b_1,\dots, b_d)
 \in\{0,1\}^d \big\} ,
\end{gather*}
is a basis of the $\Bbbk$-vector space $\bar{\cH}_d$.
\end{prop}
\begin{proof}
We start by showing that this set spans $\bar{\cH}_d$. First, each monomial on $\theta$, $X$'s and $T$'s can be rewritten as a linear combination of similar monomials with all $X$'s on the left. After that, we replace $\theta$ by $\xi_1$ and we move all $\xi$'s to the right by using~Lemma~\ref{lem:xisDegH}. This shows that the set above spans $\bar{\cH}_d$.
Linear independence follows from Lemmas~\ref{lem:theta-free} and~\ref{lem:xi-b-acts}.
\end{proof}

\begin{cor}
The representation defined in Lemma~{\rm \ref{lem:barcH-acts-Pd}} is faithful.
\end{cor}
\begin{proof}
We see from the proof of the proposition above that the elements of the basis act by linearly independent operators.
\end{proof}

\begin{rem}
\label{rem:triang-dec-degen}
We see from~Proposition~\ref{prop:basis-daH} that the algebra $\bar{\cH}_d$ has a triangular decomposition (only as a vector space)
\[
\bar{\cH}_d\cong \Bbbk[X_1,\ldots,X_d]\otimes \Bbbk\Sy_d\otimes \bV^\bullet(\xi_1,\ldots,\xi_d).
\]
\end{rem}

\subsubsection[DG-enhancement of bar H\_d]{DG-enhancement of $\boldsymbol{\bar\cH_d}$}

Let $\ell$ and $\bfQ$ be as in~Section~\ref{ssec:DAH}.

\begin{defn}
Define an operator $\partial_\bfQ$ on $\widebar{\cH}_d$
by declaring that
 $\partial_\bfQ$ acts as zero on $\widebar{H}_d\subseteq \widebar{\cH}_d$, while
 \[
 \partial_\bfQ(\theta) = \prod_{r=1}^\ell(X_1-Q_r) ,
 \]
 and it respects the graded Leibniz rule: for $a,b\in\widebar{\cH}_d$, $\partial_\bfQ(ab)=\partial_\bfQ(a)b+(-1)^{\lambda(a)}a\partial_\bfQ(b)$.
\end{defn}

\begin{lem} \label{lem:diff-daH}
The operator $\partial_\bfQ$ is a differential on $\widebar{\cH}_d$.
\end{lem}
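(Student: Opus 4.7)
The plan is to verify two conditions: that the odd super-derivation of the \emph{free} algebra in $T_i,X_i,\theta$ determined by the prescribed values on generators descends to the quotient $\widebar{\cH}_d$ (i.e.\ sends every defining relation to $0$), and that $\partial_\bfQ^2=0$. Throughout, let $P=\prod_{r=1}^{\ell}(X_1-Q_r)\in\Pol_d$ so that $\partial_\bfQ(\theta)=P$, and set $\alpha=\partial_1(P)\in\Pol_d$, which is $s_1$-symmetric since Demazure operators produce $s_1$-invariants.

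Since $\partial_\bfQ$ vanishes on the generators of $\widebar{H}_d$, the relations \eqref{eq:degHeckeSnrels}--\eqref{eq:degHeckeXTrels} are preserved trivially. The relations $\theta^2=0$, $X_r\theta=\theta X_r$, and $T_r\theta=\theta T_r$ for $r\geq 2$ all reduce to the statement that $P=\partial_\bfQ(\theta)$ commutes with $\theta$, with every $X_r$, and with $T_r$ for $r\geq 2$; the first two are immediate and the third follows because $P\in\Bbbk[X_1]$ and $T_rX_1=X_1T_r$ whenever $r\geq 2$, by the second part of \eqref{eq:degHeckeXTrels}.

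The essential verification is that $\partial_\bfQ$ annihilates the quartic relation $T_1\theta T_1\theta+\theta T_1\theta T_1=0$. Applying the signed Leibniz rule with $\partial_\bfQ(T_1)=0$ and $\partial_\bfQ(\theta)=P$ gives
\[
\partial_\bfQ(T_1\theta T_1\theta)=T_1PT_1\theta-T_1\theta T_1P,\qquad
\partial_\bfQ(\theta T_1\theta T_1)=PT_1\theta T_1-\theta T_1PT_1.
\]
To expand these I would use the commutation formula $T_1p=s_1(p)T_1-\partial_1(p)$ for $p\in\Pol_d$, together with $T_1^2=1$, to derive the identities
\[
T_1PT_1=P(X_2)-\alpha T_1,\qquad T_1P(X_2)=P(X_1)T_1+\alpha,\qquad T_1\alpha=\alpha T_1,
\]
and the fact that $\theta$ commutes with every element of $\Pol_d$. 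A term-by-term expansion of the four monomials above, moving polynomials to the left past $T_1$'s and past $\theta$, produces
\[
P(X_2)\theta-\alpha T_1\theta-P(X_1)T_1\theta T_1-\alpha\theta T_1+\alpha T_1\theta+P(X_1)T_1\theta T_1-P(X_2)\theta+\alpha\theta T_1,
\]
in which every term cancels in pairs; this is the only nontrivial computation in the proof and is the main obstacle.

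Finally, $\partial_\bfQ^2$ is an \emph{even} derivation of $\lambda$-degree $-2$: this is the standard identity $\partial_\bfQ^2(ab)=\partial_\bfQ^2(a)b+a\partial_\bfQ^2(b)$ obtained by applying the signed Leibniz rule twice, the cross-terms canceling because $\partial_\bfQ$ is odd. It therefore suffices to check $\partial_\bfQ^2=0$ on generators, and this is immediate: on $X_i$ and $T_j$ one already has $\partial_\bfQ=0$, while $\partial_\bfQ^2(\theta)=\partial_\bfQ(P)=0$ since $P\in\widebar{H}_d$.
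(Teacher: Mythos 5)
Your proof is correct and follows essentially the same route as the paper: both reduce the problem to checking that the quartic relation $T_1\theta T_1\theta+\theta T_1\theta T_1=0$ is annihilated, and both carry this out by expanding the four Leibniz terms using the commutation rule $T_1p=s_1(p)T_1-\partial_1(p)$ and the $s_1$-invariance of $\partial_1(P)$. Your write-up is slightly more complete in that it also records the (routine) preservation of the remaining relations and the verification that $\partial_\bfQ^2=0$, which the paper leaves implicit.
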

\begin{proof}
 We prove something slightly more general.
 Let $P\in\Bbbk[X_1]$ be a polynomial. Define $d_P\colon\widebar{\cH}_d\to \widebar{\cH}_d$ by declaring that
$d_P$ acts as zero on $\widebar{H}_d$, while
 $ d_P(\theta) = P$, together with the graded Leibniz rule.
 Then $d_P$ is a differential on $\widebar{\cH}_d$. To prove the claim is suffices to
 check that~${d_P( T_1\theta T_1\theta + \theta T_1\theta T_1) =0}$.

We have $T_1P=s_1(P)T_1-\partial_1(P)$ and $PT_1=T_1s_1(P)-\partial_1(P)$, where $\partial_1$ is the Demazure operator. This also implies $T_1PT_1=s_1(P)-\partial_1(P)T_1$. Note also that $\partial_1(P)$ is a symmetric polynomial with respect to $X_1,X_2$, so it commutes with $T_1$.
So, we have
 \begin{align*}
 d_P(T_1\theta T_1\theta + \theta T_1\theta T_1)
 ={}& T_1PT_1\theta-T_1\theta T_1P+PT_1\theta T_1-\theta T_1PT_1\\
 ={}& (s_1(P)\theta-\partial_1(P)T_1\theta)-(T_1\theta s_1(P)T_1-T_1\theta \partial_1(P))\\
 & + (T_1s_1(P)\theta T_1-\partial_1(P)\theta T_1)-(\theta s_1(P)-\theta \partial_1(P)T_1) = 0 ,
 \end{align*}
which proves the claim.
\end{proof}

We will prove in Proposition~\ref{prop:formalDHecke} that the homology of the DG-algebra $\big(\widebar{\cH}_d,\partial_\bfQ\big)$ is concentrated in degree $0$ and is isomorphic to $\widebar{H}_d^\bfQ$.

\subsubsection[Completions of bar H\_d]{Completions of $\boldsymbol{\bar\cH_d}$}
\label{ssec:compl_deg-Hecke-Gr}

Consider the algebra of symmetric polynomials $\Sym_d=\Pol_d^{\Sy_d}$.
We consider it as a (central) subalgebra of $\bar{\cH}_d$.

For each $d$-tuple $\bfa=(a_1,\ldots,a_d)\in\Bbbk^d$ we have a character $\chi_\bfa\colon\Sym_d\to \Bbbk$ given by the evaluation $X_r\mapsto a_r$. It is obvious from the definition that if the $d$-tuple $\bfa'$ is a permutation of the $d$-tuple $\bfa$ then the characters $\chi_\bfa$ and $\chi_{\bfa'}$ are the same.
Denote by $\frakm_\bfa$ the kernel of $\chi_\bfa$.

\begin{defn}
Denote by $\widehat{\bar{\cH}}_\bfa$ the completion of the algebra $\bar{\cH}_d$ with respect to $\frakm_\bfa$.
\end{defn}

Since $\frakm_\bfa$ is in the kernel of $\partial_\bfQ$, we can extend $\partial_\bfQ$ to $\widehat{\bar{\cH}}_\bfa$.
Set also
\begin{gather*}
\widehat\Pol_\bfa=\bigoplus_{\bfb\in\Sy_d\bfa}\Bbbk[[X_1-b_1,\ldots,X_d-b_d]]1_\bfb,\\ \widehat P_\bfa=\bigoplus_{\bfb\in\Sy_d\bfa}(\Bbbk[[X_1-b_1,\ldots,X_d-b_d]]\otimes \bV^\bullet(\und{\theta}))1_\bfb,
\end{gather*}
where $1_\bfb$ is just a formal idempotent projecting on the corresponding direct factor.
Here $\Sy_d\bfa$ is the $\Sy_d$-orbit of $\bfa$ with respect to the obvious $\Sy_d$-action on $\Bbbk^d$.
We can obviously extend the action of $\bar{\cH}_d$ on $P_d$ to an action of \smash{$\widehat{\bar{\cH}}_\bfa$} on \smash{$\widehat P_\bfa$}.
Each finite dimensional \smash{$\widehat{\bar{\cH}}_\bfa$}-module $M$ decomposes into its generalized eigenspaces $M=\bigoplus_{\bfb\in\Sy_d\bfa}M_\bfb$, where
\[
M_\bfb=\big\{m\in M\mid \exists N\in\mathbb{N} \mbox{ such that } (X_r-b_r)^Nm=0~\forall r\big\}.
\]
For each $\bfb\in\Sy_d\bfa$ the algebra $\widehat{\bar{\cH}}_\bfa$ contains an idempotent $1_\bfb$ that projects onto $M_\bfb$ when applied to $M$.

\begin{prop}
\label{prop:faithrep+basis-degen-Gr} \quad
\begin{itemize}\itemsep=0pt
 \item[$(a)$] The $\widehat{\Pol}_\bfa$-module \smash{$\widehat{\bar{\cH}}_\bfa$} is free with basis
 \[ \big\{ T_w\xi_1^{b_1}\ldots\xi_d^{b_d} \mid w\in\Sy_d,\, (b_1,\ldots,b_d)\in\{0,1\}^d\big\}. \]
 \item[$(b)$] The representation $\widehat{P}_\bfa$ of \smash{$\widehat{\bar{\cH}}_\bfa$} is faithful.
\end{itemize}
\end{prop}
\begin{proof}
 It is clear that the elements from the statement generate the $\widehat{\Pol}_\bfa$-module
 \smash{$\widehat{\bar\cH}_\bfa$}. To see that they form a basis, it is enough to remark that they act by linear independent \big(over $\widehat{\Pol}_\bfa$\big) operators on the representation \smash{$\widehat{P}_\bfa$}. This proves $(a)$. Then $(b)$ also holds because a basis acts on \smash{$\widehat{P}_\bfa$} by linearly independent operators.
\end{proof}

The algebra \smash{$\bar{H}^\bfQ_d$} has a decomposition \smash{$\bar{H}^\bfQ_d=\oplus_\bfa\bar{H}^\bfQ_\bfa$} (with a finite number of nonzero terms) such that $\Sym_d$ acts on each finite dimensional \smash{$\bar{H}^\bfQ_\bfa$}-module with a generalized character $\chi_\bfa$.

\subsection[q-version]{$\boldsymbol{q}$-version}\label{sec:qversion}

\subsubsection[Affine q-Hecke algebra]{Affine $\boldsymbol{q}$-Hecke algebra}\label{ssec:AH}
The \emph{affine $q$-Hecke algebra} $H_d$ is the $\Bbbk$-algebra generated by
$T_1,\dots ,T_{d-1}$ and $X_1^{\pm 1},\dots ,X_d^{\pm 1}$,
with relations 
\begin{gather}\label{eq:affHeckeX}
X_rX_r^{-1}=X_r^{-1}X_r=1, \qquad
X_i^{\pm 1}X_j^{\pm 1} = X_j^{\pm 1}X_i^{\pm 1},
\\\label{eq:affHeckeT}
(T_i-q)(T_i+1)=0,\qquad T_iT_j=T_jT_i~\text{ if }| i-j| >1 ,
\qquad T_iT_{i+1}T_i = T_{i+1}T_{i}T_{i+1},
\\ \label{eq:affHeckeXT}
T_i X_j = X_j T_i\mspace{20mu}\text{for\ } j-i\ne 0,1,
\quad
T_i X_{i} T_i =qX_{i+1}.
\end{gather}

Note that relation \eqref{eq:affHeckeT} implies that the element $T_i$ is invertible.
For $w=s_{i_1}\dots s_{i_k}\in\Sy_d$ a reduced decomposition, we put $T_w=T_{i_1}\dots T_{i_k}$.
Then $T_w$ is independent of the choice of the reduced decomposition of $w$ and the set
\[
\big\{ X_1^{m_1}\dots X_d^{m_d}T_w\big\}_{w\in\Sy_d, m_i\in\bZ}
\]
is a basis of the $\Bbbk$-vector space $H_d$.
There is a faithful representation of $H_d$ on $\Poll_d$, where
$T_i(f)=qs_i(f)-(q-1)X_{i+1}\partial_i(f)$.

Let $\ell$ be a positive integer. Let $\bfQ=(Q_1,\dots,Q_\ell)$ be an $\ell$-tuple of nonzero elements of the field $\Bbbk$.

\begin{defn}\label{def:cyclAH}
 The \emph{cyclotomic $q$-Hecke algebra} is the quotient
 \[
 H_d^\bfQ = H_d/\prod_{r=1}^\ell(X_1-Q_r).
 \]
 \end{defn}

\subsubsection[The algebra H\_d]{The algebra $\boldsymbol{\cH_d}$}\label{ssec:affGHecke}

\begin{defn}\label{def:enhacedqH}
The algebra $\cH_d$ is the $\Bbbk$-algebra generated by
$T_1,\dots ,T_{d-1}$ and $X_1^{\pm 1},\dots ,X_d^{\pm 1}$ in $\lambda$-degree zero,
and an extra generator $\theta$ in $\lambda$-degree 1,
with relations~\eqref{eq:affHeckeX} to~\eqref{eq:affHeckeXT} and
\begin{gather*}
\theta^2 = 0,\qquad
 X_r^{\pm 1}\theta = \theta X_r^{\pm 1}\quad \text{for}\quad r=1,\dots, d,
 \\
T_r\theta=\theta T_r \quad \text{for}\quad r>1,
\\
 T_1\theta T_1\theta + \theta T_1\theta T_1 = (q-1)\theta T_1\theta .
\end{gather*}
\end{defn}

The algebra $\cH_d$ contains the affine $q$-Hecke algebra $H_d$ as a subalgebra concentrated in $\lambda$-degree zero.

\begin{lem}
\label{lem:polrep-HeckeGrassm-q}
The algebra ${\cH}_d$ acts on $Pl_d$ by
\begin{align*}
&T_r(f) = qs_r(f) - (q-1)X_{r+1}\partial_r(f) , \qquad
 X_r^{\pm 1}(f) = X_r^{\pm 1}f ,
\qquad
 \theta(f) = \theta_1f ,
\end{align*}
for all $f\in P_d$ and where $s_r(f)$ and $\partial_r(f)$ are
as in~\eqref{eq:SnActsonR} and~\eqref{eq:Demazure}.
\end{lem}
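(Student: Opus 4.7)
The strategy mirrors the degenerate version exactly: verify the defining relations by direct computation on $Pl_d$, and defer faithfulness to the $q$-analog of~\fullref{prop:basis-daH}.

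The affine $q$-Hecke relations~\eqref{eq:affHeckeX}--\eqref{eq:affHeckeXT} are the standard ones for the faithful polynomial representation of $H_d$ on $\Poll_d$ recalled in~\fullref{ssec:AH}; they extend to $Pl_d = \Poll_d \otimes \bV^\bullet(\und{\theta})$ because $T_r$ and $X_r^{\pm 1}$ act on the $\Poll_d$-factor only, the $\Sy_d$-action and Demazure operators being the ones introduced on $P_d$ in~\S~2.1.2 extended to $Pl_d$ by inverting the $X_r$. Three of the $\theta$-relations are immediate: $\theta^2 = 0$ is $\theta_1^2 = 0$ in the exterior algebra; $X_r^{\pm 1}\theta = \theta X_r^{\pm 1}$ is supercommutativity of $Pl_d$; and $T_r\theta = \theta T_r$ for $r>1$ follows since $s_r(\theta_1) = \theta_1$ and $\partial_r(\theta_1) = 0$ in that range.

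The only nontrivial identity to check is the mixed relation $T_1\theta T_1\theta + \theta T_1\theta T_1 = (q-1)\theta T_1\theta$. The key ingredient is the explicit formula
\[
T_1(\theta_1) = q\theta_1 + (qX_1 - X_2)\theta_2 ,
\]
which I would obtain from $s_1(\theta_1) = \theta_1 + (X_1-X_2)\theta_2$ and $\partial_1(\theta_1) = -\theta_2$. Using this together with the Leibniz rule for $\partial_1$ on products of the form $\theta_1 f$ and $\theta_2 f$, I would evaluate both sides of the putative identity on a general $f \in Pl_d$, expanding each $T_1$ via $T_1(g) = qs_1(g) - (q-1)X_2\partial_1(g)$ and collecting terms by exterior degree. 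The monomials of exterior degree $\geq 2$ must cancel using $\theta_1^2 = 0$ and $\theta_1\theta_2 = -\theta_2\theta_1$; the remaining degree-1 terms reorganize into $(q-1)\theta T_1\theta$. A useful consistency check is that sending $q \to 1$ collapses the formulas to the degenerate ones verified in the proof of~\fullref{lem:diff-daH}.

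Faithfulness then reduces to a PBW-style linear independence argument of the same shape as~\fullref{lem:theta-free} and~\fullref{lem:xi-acts}: any relation $\sum_{\bfb \in M} h_\bfb \theta^\bfb = 0$ with $h_\bfb \in H_d$ acting on $Pl_d$ forces, by picking a minimal $\bfb_0$, that $h_{\bfb_0}$ kills $\theta^{\mathbf 1}\Poll_d$, which by the classical faithfulness of $\Poll_d$ over $H_d$ gives $h_{\bfb_0} = 0$. The main obstacle is the middle step: the bookkeeping in the $q$-deformed mixed relation is delicate, because the $\theta_2$-component of $T_1(\theta_1)$ interacts with the $-(q-1)X_2\partial_1$ piece of $T_1$ in a way where the $(q-1)$ correction must appear with the right coefficient and no residual higher-exterior-degree terms. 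Once this computation is carried out cleanly, the lemma follows.
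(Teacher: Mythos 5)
Your proposal is correct and takes essentially the same route as the paper: the paper's own proof simply states that the defining relations are checked by a straightforward computation and that faithfulness follows from \fullref{prop:basis-qAGH} (whose proof imitates \fullref{prop:basis-daH}, i.e., the PBW-style argument via \fullref{lem:theta-free} and \fullref{lem:xi-acts} that you describe). Your key formula $T_1(\theta_1)=q\theta_1+(qX_1-X_2)\theta_2$ is the right computation to organize the verification of the mixed relation.
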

\begin{proof}
 The defining relations of $\cH_d$ can be checked by a straightforward computation.
\end{proof}

Define $\xi_1,\dots , \xi_d\in {\cH}_d$ by the rules $\xi_1=\theta$, $\xi_{i+1}=T_i\xi_iT^{-1}_i$.
The following is straightforward.

\begin{lem}\label{lem:xisqH}
 The elements $\xi_r$ satisfy for all $r=1,\dots ,d-1$ and all $\ell=1,\dots, d$,
 \[
\xi_\ell^2=0,\qquad \xi_r\xi_\ell + \xi_\ell\xi_r = 0
\]
and
\[
T_\ell\xi_r =
\begin{cases}
 \xi_rT_\ell & \text{ if }\ r\neq \ell,\ell+1 ,
 \\
 \xi_\ell T_\ell + (q-1)(\xi_{\ell+1}-\xi_\ell) & \text{ if }\ r = \ell+1 ,
 \\
 \xi_{\ell+1} T_\ell & \text{ if }\ r = \ell .
\end{cases}
\]
\end{lem}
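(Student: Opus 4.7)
The plan is to establish the three assertions in order, each by induction combined with the defining relations of $\cH_d$. For $\xi_\ell^2 = 0$, I would induct on $\ell$: the base case $\xi_1^2 = \theta^2 = 0$ is a defining relation, and the step $\xi_{i+1}^2 = T_i \xi_i^2 T_i^{-1} = 0$ is immediate from $\xi_{i+1} = T_i \xi_i T_i^{-1}$.

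For the three commutation formulas $T_\ell \xi_r$, the case $r = \ell$ is exactly the recursive definition. The case $r = \ell + 1$ follows from a direct computation: $T_\ell \xi_{\ell+1} = T_\ell^2 \xi_\ell T_\ell^{-1}$, and substituting the Hecke quadratic relation $T_\ell^2 = (q-1)T_\ell + q$ together with $qT_\ell^{-1} = T_\ell - (q-1)$ (which gives $q\xi_\ell T_\ell^{-1} = \xi_\ell T_\ell - (q-1)\xi_\ell$) produces exactly $\xi_\ell T_\ell + (q-1)(\xi_{\ell+1} - \xi_\ell)$. The remaining case $r \notin \{\ell, \ell+1\}$ asserts that $T_\ell$ commutes with $\xi_r$, which I would prove by induction on $r$. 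The base $r = 1$ is $T_\ell \theta = \theta T_\ell$ for $\ell > 1$, a defining relation. The step writes $\xi_r = T_{r-1} \xi_{r-1} T_{r-1}^{-1}$ and either invokes $T_\ell T_{r-1} = T_{r-1} T_\ell$ when $|\ell - (r-1)| \geq 2$ (conjugating the inductive hypothesis), or, in the remaining subcase $r = \ell + 2$, uses the braid relation $T_\ell T_{\ell+1} T_\ell = T_{\ell+1} T_\ell T_{\ell+1}$ to derive $T_\ell \xi_{\ell+2} T_\ell^{-1} = T_{\ell+1}(T_\ell \xi_\ell T_\ell^{-1})T_{\ell+1}^{-1} = \xi_{\ell+2}$, using that $T_{\ell+1}$ commutes with $\xi_\ell$ (already known from the inductive hypothesis).

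For the anti-commutation $\xi_r \xi_\ell + \xi_\ell \xi_r = 0$ with $r \neq \ell$, I would start from the base case $\xi_1 \xi_2 + \xi_2 \xi_1 = 0$: multiplying $\theta T_1 \theta T_1^{-1} + T_1 \theta T_1^{-1} \theta$ on the right by $qT_1$ and substituting $qT_1^{-1} = T_1 - (q-1)$ along with $\theta^2 = 0$ yields $q\theta T_1 \theta + T_1 \theta T_1 \theta T_1$; this expression equals zero, as one sees by multiplying the defining relation $T_1 \theta T_1 \theta + \theta T_1 \theta T_1 = (q-1)\theta T_1 \theta$ on the right by $T_1$ and using $T_1^2 = (q-1)T_1 + q$. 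Invertibility of $T_1$ then concludes. An induction on $j$ extends this to $\xi_1 \xi_j + \xi_j \xi_1 = 0$ for all $j \geq 2$, using that $T_{j-1}$ commutes with $\theta$ when $j \geq 3$ to conjugate the hypothesis. Finally, for arbitrary $r < \ell$, an induction on $r$ completes the argument via $\xi_r = T_{r-1} \xi_{r-1} T_{r-1}^{-1}$ together with the commutation $T_{r-1} \xi_\ell = \xi_\ell T_{r-1}$ from the previous step, which applies since $\ell > r$ forces $\ell \neq r-1, r$.

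The main obstacle I anticipate is the careful bookkeeping in the commutation step, particularly the subcase $r = \ell + 2$, which requires the braid relation to propagate commutativity through nested conjugations. Once the commutation formulas are in place, the anti-commutation follows cleanly by conjugation and inductive reduction with no further case splits.
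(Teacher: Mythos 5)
Your proposal is correct: all the computations check out (in particular the base case $q\theta T_1\theta + T_1\theta T_1\theta T_1=0$ follows from multiplying the defining relation on the right by $T_1$ exactly as you describe, and the quadratic-relation manipulation for $T_\ell\xi_{\ell+1}$ gives precisely the stated formula). The paper offers no proof — it simply declares the lemma straightforward — and your inductive conjugation argument from the defining relations is the intended routine verification.
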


It is not hard to write a basis of ${\cH}_d$ in terms of the $\xi_r$'s.

\begin{prop} \label{prop:basis-qAGH}
The set
\begin{gather*}
 \big\{ X_1^{a_1}\dots X_d^{a_d}T_w \xi_1^{b_1}\dots \xi_d^{b_d} \mid w\in\Sy_d,\, (a_1,\dots, a_d) \in\bZ^d ,\, (b_1,\dots, b_d)
 \in\{0,1\}^d \big\} ,
\end{gather*}
is a basis of the $\Bbbk$-vector space ${\cH}_d$.
\end{prop}
\begin{proof}
Imitate the proof of~Proposition~\ref{prop:basis-daH}.
\end{proof}

\begin{cor}
 The representation defined in Lemma~{\rm \ref{lem:polrep-HeckeGrassm-q}} is faithful.
\end{cor}

\begin{rem}
\label{rem:triang-dec-q}
We see from Proposition~\ref{prop:basis-qAGH} that the algebra ${\cH}_d$ has a triangular decomposition (only as a vector space)
\[
{\cH}_d=\Bbbk\big[X_1^{\pm 1},\ldots,X_d^{\pm 1}\big]\otimes {H}^{\rm fin}_d \otimes \bV^\bullet(\xi_1,\ldots,\xi_d),
\]
where $H^{\rm fin}_d$ is the (finite dimensional) Hecke algebra of the group $\Sy_d$. Explicitly, the algebra~$H^{\rm fin}_d$ is defined by generators $T_1,\ldots, T_{d-1}$ and the relations in \eqref{eq:affHeckeT}.
\end{rem}

\subsubsection[DG-enhancement of H\_d]{DG-enhancement of $\boldsymbol{\cH_d}$}

Let $\ell$ and $\bfQ$ be as in~Section~\ref{ssec:AH}.

\begin{defn}
 Define an operator $\partial_\bfQ$ on $\cH_d$
by declaring that
 $\partial_\bfQ$ acts as zero on $H_d$, while%
 \[
 \partial_\bfQ(\theta) = \prod_{r=1}^\ell(X_1-Q_r) ,
 \]
and for $a,b\in\widebar{\cH}_d$, $\partial_\bfQ(ab)=\partial_\bfQ(a)b+(-1)^{\lambda(a)}a\partial_\bfQ(b)$.
\end{defn}

\begin{lem}
The operator $\partial_\bfQ$ is a differential on $\cH_d$.
\end{lem}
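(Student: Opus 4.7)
The plan is to mimic the proof of \fullref{lem:diff-daH}, adapted to the $q$-deformed setting. I would first prove the slightly more general statement: for any polynomial $P\in\Bbbk[X_1]$, the operator $d_P\colon \cH_d\to\cH_d$ declared by $d_P(\theta)=P$, $d_P(x)=0$ for $x\in H_d$, and extended by the graded Leibniz rule $d_P(ab)=d_P(a)b+(-1)^{\lambda(a)}a\,d_P(b)$, is a well-defined differential. The statement of the lemma is the special case $P=\prod_{r=1}^{\ell}(X_1-Q_r)$.

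To check well-definedness, I must verify that $d_P$ annihilates every defining relation. Relations internal to $H_d$ are trivial since $d_P$ vanishes on $H_d$. Applying $d_P$ to $\theta^2=0$ yields $P\theta-\theta P=0$, which holds because $\theta$ commutes with every $X_r$ and hence with $P$. The commutations $X_r^{\pm 1}\theta=\theta X_r^{\pm 1}$ and $T_r\theta=\theta T_r$ for $r>1$ reduce to $[X_r^{\pm 1},P]=0$ and $[T_r,P]=0$; both hold, the second one because $P\in\Bbbk[X_1]$ and $T_r$ commutes with $X_1$ for $r>1$ by~\eqref{eq:affHeckeXT}.

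The only non-trivial check is the modified Hecke-theta relation $T_1\theta T_1\theta+\theta T_1\theta T_1-(q-1)\theta T_1\theta=0$. Applying $d_P$ and using the graded Leibniz rule gives
\[
T_1PT_1\theta-T_1\theta T_1P+PT_1\theta T_1-\theta T_1PT_1-(q-1)\bigl(PT_1\theta-\theta T_1P\bigr).
\]
To show this vanishes, I would substitute the $q$-deformed commutation formulas derived from the affine $q$-Hecke relations (equivalently, read off from the faithful polynomial representation of \fullref{lem:polrep-HeckeGrassm-q}):
\[
T_1P=s_1(P)T_1-(q-1)X_2\partial_1(P),\qquad PT_1=T_1s_1(P)-(q-1)X_2\partial_1(P),
\]
\[
T_1PT_1=qs_1(P)+(q-1)\bigl(s_1(P)-X_2\partial_1(P)\bigr)T_1.
\]
After substitution, I would use that $\theta$ commutes with every element of $\Bbbk[X_1,\ldots,X_d]$ to move polynomial coefficients past $\theta$, and that $\partial_1(P)$ is symmetric in $X_1,X_2$ and hence commutes with $T_1$. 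Collecting terms according to whether they end in $T_1$ or not, one sees that the $q$-correction on the right-hand side of the original relation is exactly what is needed for all contributions to cancel; this is the direct $q$-analogue of the cancellation at the end of the proof of \fullref{lem:diff-daH}.

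Finally, $\partial_\bfQ^2=0$ is automatic: one checks from the graded Leibniz rule that, for an odd differential, $\partial_\bfQ^2$ is again a derivation, and it vanishes on the generators since $\partial_\bfQ^2(\theta)=\partial_\bfQ(P)=0$ (as $P\in H_d$) and $\partial_\bfQ^2$ is zero on $H_d$. The main obstacle is the bookkeeping in the modified Hecke-theta relation, where the extra $(q-1)$-summands generate many intermediate terms; once they are organized by their final $T_1$-content, the cancellation is mechanical and explains the precise coefficient $(q-1)$ appearing in \fullref{def:enhacedqH}.
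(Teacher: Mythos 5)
Your proposal is correct and follows essentially the same route as the paper: introduce the more general derivation $d_P$, reduce everything to the single nontrivial relation $T_1\theta T_1\theta+\theta T_1\theta T_1=(q-1)\theta T_1\theta$, and kill it using the commutation formulas $T_1P=s_1(P)T_1-(q-1)X_2\partial_1(P)$, $PT_1=T_1s_1(P)-(q-1)X_2\partial_1(P)$ together with the fact that $\partial_1(P)$ is $s_1$-symmetric and hence commutes with $T_1$. Your explicit verification of the remaining relations and of $d_P^2=0$, and your restriction to $P\in\Bbbk[X_1]$ (which is what the commutation with $T_r$, $r>1$, actually requires), only make the argument slightly more careful than the paper's.
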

\begin{proof}
Similarly to the proof of~Lemma~\ref{lem:diff-daH}, we consider a more general differential $d_P$.
We have to check
\[
d_P(T_1\theta T_1\theta + \theta T_1\theta T_1)= d_P((q-1)\theta T_1\theta).
\]

We have $T_1P=s_1(P)T_1-(q-1)X_2\partial_1(P)$ and $PT_1=T_1s_1(P)-(q-1)X_2\partial_1(P)$, where $\partial_1$ is the Demazure operator. Note also that $\partial_1(P)$ is a symmetric polynomial with respect to $X_1$,~$X_2$, so it commutes with $T_1$.
So, we have
 \begin{align*}
 d_P(T_1\theta T_1\theta + \theta T_1\theta T_1)
 ={}& T_1PT_1\theta-T_1\theta T_1P+PT_1\theta T_1-\theta T_1PT_1\\
 ={}& \big(T_1^2s_1(P)\theta-(q-1)\partial_1(P)T_1X_2\theta\big)-(T_1\theta s_1(P)T_1 \\
 & -(q-1)T_1\theta X_2\partial_1(P)) +(T_1s_1(P)\theta T_1-(q-1)X_2\partial_1(P)\theta T_1) \\
 & -(\theta s_1(P)T_1^2-(q-1)\theta \partial_1(P)X_2T_1) \\
 ={}& T_1^2s_1(P)\theta-\theta s_1(P)T_1^2\\
 ={}& (q-1)PT_1\theta-(q-1)\theta T_1P\\
 ={}& d_P((q-1)\theta T_1\theta) ,
 \end{align*}
which proves the claim.
\end{proof}

We will prove in Proposition~\ref{prop:formalAHecke} that the homology of the DG-algebra $(\cH_d,\partial_\bfQ)$ is concentrated in degree $0$ and is isomorphic to $H_d^\bfQ$.

\subsubsection[Completions of H\_d]{Completions of $\boldsymbol{\cH_d}$}
\label{ssec:compl-affHeckGr}

Similarly to~Section~\ref{ssec:compl_deg-Hecke-Gr}, we want to define a completion of the algebra $\cH_d$.
Consider the algebra of symmetric Laurent polynomials \smash{$\Syml_d=\Bbbk\big[X_1^{\pm 1},\ldots,X_d^{\pm 1}\big]^{\Sy_d}$}.
We consider it as a~(central) subalgebra of $\cH_d$.

For each $d$-tuple $\bfa=(a_1,\ldots,a_d)\in(\Bbbk^\times)^n$, we have a character $\chi_\bfa\colon\Syml_d\to \Bbbk$ given by the evaluation $X_r\mapsto a_r$. Denote by $\frakm_\bfa$ the kernel of $\chi_\bfa$.

\begin{defn}
Denote by $\widehat{\cH}_\bfa$ the completion of the algebra $\cH_d$ at the sequence of ideals $\cH_d\frakm_\bfa^j \cH_d$.
\end{defn}

Since $\frakm_\bfa$ is in the kernel of $\partial_\bfQ$, we can extend $\partial_\bfQ$ to $\widehat{\cH}_\bfa$.
Set also \[\widehat P_\bfa=\Bbbk[[X_1-a_1,\ldots, X_d-a_d]]\otimes \bV^\bullet(\und{\theta}).\]
We can obviously extend the action of $\cH_d$ on $P_d$ to an action of $\widehat{\cH}_\bfa$ on $\widehat P_\bfa$.
Similarly to $\widehat{\bar{\cH}}_\bfa$, the algebra $\widehat{\cH}_\bfa$ has idempotents $1_\bfb$, $\bfb\in \Sy_d\bfa$ that are defined in the same way as in~Section~\ref{ssec:compl_deg-Hecke-Gr}.

Similar to~Proposition~\ref{prop:faithrep+basis-degen-Gr}, we have the following.
\begin{prop}\label{prop:faithrep+basis-affine-Gr}\quad
\begin{itemize}\itemsep=0pt
\item[$(a)$] The $\widehat{\Pol}_\bfa$-module $\widehat{\cH}_\bfa$ is free
 with basis
 \[
 \big\{ T_w\xi_1^{b_1}\ldots\xi_d^{b_d} \mid w\in\Sy_d,\, (b_1,\ldots,b_d)\in \{0,1\}^d\big\}.
 \]
\item[$(b)$] The representation $\widehat{P}_\bfa$ of $\widehat{\cH}_\bfa$ is faithful.
\end{itemize}
\end{prop}

The algebra ${H}^\bfQ_d$ has a decomposition ${H}^\bfQ_d=\oplus_\bfa{H}^\bfQ_\bfa$ (with a finite number of nonzero terms) such that $\Syml_d$ acts on each finite dimensional $H^\bfQ_\bfa$-module with a generalized character $\chi_\bfa$.

\section{DG-enhanced versions of KLR algebras}\label{sec:KLRG}

DG-enhanced versions of KLR algebras were introduced in~\cite{naissevaz3}
as one of the main ingredients in the categorification of Verma modules
for symmetrizable quantum Kac--Moody algebras.

Let $\Gamma=(I,A)$ be a quiver without loops with set of vertices $I$ and set of arrows $A$.
We call elements in $I$ \emph{labels}.
Let also $\bN[I]$ be the set of formal $\bN$-linear combinations of elements of $I$. Fix $\nu \in \bN[I]$,
\[
\nu = \sum_{i\in I} \nu_i \cdot i, \qquad \nu_i \in \bN,\quad i\in I,
\]
and set $|\nu| = \sum_i \nu_i$. We allow the quiver to have infinite number of vertices. In this case, only a~finite number of $\nu_i$ is nonzero.

For each $i, j\in I$, we denote by $h_{i,j}$ the number of arrows in
the quiver $\Gamma$ going from $i$ to $j$, and define for $i \neq j$ the polynomials
\[ \cQ_{i,j}(u,v)=(u-v)^{h_{i,j}}(v-u)^{h_{j,i}} . \]

\subsection[The algebra R(nu)]{The algebra $\boldsymbol{\cR(\nu)}$}\label{subsec:diags}

We give a diagrammatic definition of the algebras $\cR=\cR(\Gamma)$ from~\cite[Section~3]{naissevaz3}.
The definition we give corresponds to the presentation in~\cite[Corollary~3.16]{naissevaz3}.

\begin{defn}\label{def:KLR-enh}
 For each $\nu\in\bN[I]$, we define the $\Bbbk$-algebra $\cR(\nu)$ by the data:
\begin{itemize}\itemsep=0pt
\item It is generated by the \emph{KLR generators}
\[
\tikz[very thick,scale=1.5,baseline={([yshift=.8ex]current bounding box.center)}]{
  \draw (0,-.5) node[below] {\small $i$} -- (0,.5) node [midway,fill=black,circle,inner sep=2pt]{};
 	 \node at (-.60,0){$\dots$};
  \node at (.60,0){$\dots$};
} \mspace{40mu}\text{and}\mspace{40mu}
\tikz[very thick,scale=1.5,baseline={([yshift=.8ex]current bounding box.center)}]{
	  \draw (0,-.5) node[below] {\small $i$} -- (1,.5);
	  \draw (1,-.5) node[below] {\small $j$} -- (0,.5);
 	 \node at (-.25,0){$\cdots$};
  \node at (1.25,0){$\cdots$};
	}
 \]
 for $i,j\in I$, where each diagram contains $\nu_i$ strands labeled $i$,
 together with \emph{floating dots} that are confined to a region immediately to the right
 of the left-most strand,
\[
\tikz[very thick,baseline={([yshift=-.5ex]current bounding box.center)},decoration={
 markings, mark=at position 0.5 with {\arrow{>}}}] {
 \draw (-1,-.5) node[below] {\small $i$} -- (-1,.5);
 \node at (.5,0){$\cdots$};
	 \fdot[]{}{-.5,0};
	 	}.
 \]
Diagrams are taken modulo isotopies that do not allow triple crossings of strands, do not allow a dot going through a crossing, and do not allow two floating dots at the same level.

\item The multiplication is given by gluing diagrams on top of each other\footnote{We follow the usual (and useful) convention that $m$ dots on the same strand are depicted as a single dot with an exponent $m$.} whenever the labels of the strands agree, and zero otherwise, subject to the local relations~\eqref{eq:KLRR2} to~\eqref{eq:extrarel} below,
 for all $i,j,k \in I$.
\begin{itemize}\itemsep=0pt
\item[$\diamond$] The \emph{KLR relations, for all $i, j, k\in I$}:
\begin{gather}\label{eq:KLRR2}
	\tikz[very thick,baseline={([yshift=1.0ex]current bounding box.center)}]{
	 \draw +(0,-.75) node[below] {\small $i$} .. controls (1,0) .. +(0,.75);
	 \draw +(1,-.75) node[below] {\small $i$} .. controls (0,0) .. +(1,.75);
	} = 0 \qquad\text{and}\qquad
 \tikz[very thick,baseline={([yshift=1.0ex]current bounding box.center)}]{
	 \draw +(0,-.75) node[below] {\small $i$} .. controls (1,0) .. +(0,.75);
	 \draw +(1,-.75) node[below] {\small $j$} .. controls (0,0) .. +(1,.75);
	} =
	\tikz[very thick,baseline={([yshift=1.0ex]current bounding box.center)}]{
\node at (0.5,0) {$\cQ_{i,j}(Y_1,Y_2)$};
  \draw (-.55,-.4) -- (-.55,.4); \draw (1.55,-.4) -- (1.55,.4);
  \draw (-.55,-.4) -- (1.55,-.4); \draw (-.55,.4) -- (1.55,.4);
  \draw (0,-.75) node[below] {\small $i$} -- (0,-.4);\draw (0,.75) -- (0,.4);
  \draw (1,-.75) node[below] {\small $j$} -- (1,-.4);\draw (1,.75) -- (1,.4);
	}\qquad\text{if }i\neq j,
\\
	\tikz[very thick,baseline={([yshift=1.0ex]current bounding box.center)}]{
	  \draw (0,-.5) node[below] {\small $i$} -- (1,.5);
	  \draw (1,-.5) node[below] {\small $j$} -- (0,.5) node [near start,fill=black,circle,inner sep=2pt]{};
	}
	  =
	\tikz[very thick,baseline={([yshift=1.0ex]current bounding box.center)}]{
	  \draw (0,-.5) node[below] {\small $i$} -- (1,.5) ;
	  \draw (1,-.5) node[below] {\small $j$} -- (0,.5) node [near end,fill=black,circle,inner sep=2pt]{};
	}, \qquad
	\tikz[very thick,baseline={([yshift=1.0ex]current bounding box.center)}]{
	  \draw (0,-.5) node[below] {\small $i$} -- (1,.5) node [near start,fill=black,circle,inner sep=2pt]{};
	  \draw (1,-.5) node[below] {\small $j$} -- (0,.5);
	}
 =
	\tikz[very thick,baseline={([yshift=1.0ex]current bounding box.center)}]{
	  \draw (0,-.5) node[below] {\small $i$} -- (1,.5)node [near end,fill=black,circle,inner sep=2pt]{};
	  \draw (1,-.5) node[below] {\small $j$} -- (0,.5);
	} \qquad
	  \text{ if }i \ne j,
\\
\label{eq:nh1}
	\tikz[very thick,baseline={([yshift=1.0ex]current bounding box.center)}]{
	  \draw (0,-.5) node[below] {\small $i$} -- (1,.5);
	  \draw (1,-.5) node[below] {\small $i$} -- (0,.5) node [near end,fill=black,circle,inner sep=2pt]{};
	}
	 -
	\tikz[very thick,baseline={([yshift=1.0ex]current bounding box.center)}]{
	  \draw (0,-.5) node[below] {\small $i$} -- (1,.5);
	  \draw (1,-.5) node[below] {\small $i$} -- (0,.5) node [near start,fill=black,circle,inner sep=2pt]{};
	} =
	\tikz[very thick,baseline={([yshift=1.0ex]current bounding box.center)}]{
	  \draw (0,-.5) node[below] {\small $i$} -- (0,.5);
	  \draw (1,-.5) node[below] {\small $i$} -- (1,.5);
	}  =
 	\tikz[very thick,baseline={([yshift=1.0ex]current bounding box.center)}]{
	  \draw (0,-.5) node[below] {\small $i$} -- (1,.5) node [near start,fill=black,circle,inner sep=2pt]{};
	  \draw (1,-.5) node[below] {\small $i$} -- (0,.5);
	} 	 -
	\tikz[very thick,baseline={([yshift=1.0ex]current bounding box.center)}]{
	  \draw (0,-.5) node[below] {\small $i$} -- (1,.5)node [near end,fill=black,circle,inner sep=2pt]{};
	  \draw (1,-.5) node[below] {\small $i$} -- (0,.5);
		},
\\ \label{eq:R3_1}
	\tikz[very thick,scale=.75,baseline={([yshift=1.0ex]current bounding box.center)}]{
		 	 \draw +(1,-1) node[below] {\small $j$} .. controls (0,0) .. +(1,1);
		 \draw (0,-1) node[below] {\small $i$} -- (2,1);
	  \draw (0,1)-- (2,-1) node[below] {\small $k$} ;
	 }
=
	\tikz[very thick,scale=.75,baseline={([yshift=1.0ex]current bounding box.center)}]{
		 	 \draw +(1,-1) node[below] {\small $j$} .. controls (2,0) .. +(1,1);
		 \draw (0,-1) node[below] {\small $i$} -- (2,1);
	  \draw (0,1)-- (2,-1) node[below] {\small $k$};
	 }
\qquad \text{unless $i=k\neq j$,}
\\
 \label{eq:R3_2}
	\tikz[very thick,scale=.75,baseline={([yshift=1.0ex]current bounding box.center)}]{
		 	 \draw +(1,-1) node[below] {\small $j$} .. controls (0,0) .. +(1,1);
		 \draw (0,-1) node[below] {\small $i$} -- (2,1);
	  \draw (0,1)-- (2,-1) node[below] {\small $i$} ;
	 }   -
	 \tikz[very thick,scale=.75,baseline={([yshift=1.0ex]current bounding box.center)}]{
		 	 \draw +(1,-1) node[below] {\small $j$} .. controls (2,0) .. +(1,1);
		 \draw (0,-1) node[below] {\small $i$} -- (2,1);
	  \draw (0,1)-- (2,-1) node[below] {\small $i$};
	 }
 = \mspace{15mu}
	\tikz[very thick,scale=.75,baseline={([yshift=1.0ex]current bounding box.center)}]{
\node at (1,0) {\small $\dfrac{\cQ_{i,j}(Y_3,Y_2)-\cQ_{i,j}(Y_1,Y_2)}{Y_3-Y_1}$};
  \draw (-2,-.75) -- (-2,.75); \draw (3.9,-.75) -- (3.9,.75);
  \draw (-2,-.75) -- (3.9,-.75); \draw (-2,.75) -- (3.9,.75);
  \draw (0,-1) node[below] {\small $i$} -- (0,-.75);\draw (0,.75) -- (0,1);
  \draw (1,-1) node[below] {\small $j$} -- (1,-.75);\draw (1,.75) -- (1,1);
  \draw (2,-1) node[below] {\small $i$} -- (2,-.75);\draw (2,.75) -- (2,1);
	}
\qquad \text{if $i \neq j $.}
\end{gather}

\item[$\diamond$] And the \emph{additional relations, for all $i$, $j\in I$}:
\begin{gather}\label{eq:ExtR2}
	\tikz[very thick,baseline={([yshift=1.0ex]current bounding box.center)}]{
	 \draw (1,-.75) node[below] {\small $i$} -- (1,.75);
		\fdot[]{}{1.5,-0.25} \fdot[]{}{1.8,0.25}
\node at (2.75,0) {$\dots$};
	}
   =   0 ,
	\\
	\tikz[very thick,baseline={([yshift=1.0ex]current bounding box.center)}]{
	 \draw +(0,-.75) node[below] {\small $i$} .. controls (1.25,0) .. +(0,.75);
	 \draw +(1,-.75) node[below] {\small $j$} .. controls (-.25,0) .. +(1,.75);
	 \fdot{}{.5,0};\fdot{}{.5,.7};
  }
= -
	\tikz[very thick,baseline={([yshift=1.0ex]current bounding box.center)}]{
	 \draw +(0,-.75) node[below] {\small $i$} .. controls (1.25,0) .. +(0,.75);
	 \draw +(1,-.75) node[below] {\small $j$} .. controls (-.25,0) .. +(1,.75);
	 \fdot{}{.5,0};\fdot{}{.5,-.7};
  }. \label{eq:extrarel}
 \end{gather}
\end{itemize}\end{itemize}
\end{defn}

\begin{rem}
 A diagram with a box containing a polynomial means a polynomial in dots.
The indices in the variables indicate the strands carrying the corresponding dots.
 For example, for~$p(Y_1,Y_2)=\sum_{r,s} c_{r,s}Y_1^rY_2^s$ with $c_{r,s}\in\Bbbk$, we have
 \[ 	\tikz[very thick,baseline={([yshift=1.0ex]current bounding box.center)}]{
\node at (0.5,0) {$p(Y_1,Y_2)$};
  \draw (-.55,-.4) -- (-.55,.4); \draw (1.55,-.4) -- (1.55,.4);
  \draw (-.55,-.4) -- (1.55,-.4); \draw (-.55,.4) -- (1.55,.4);
  \draw (0,-.75) node[below] {\small $i$} -- (0,-.4);\draw (0,.75) -- (0,.4);
  \draw (1,-.75) node[below] {\small $j$} -- (1,-.4);\draw (1,.75) -- (1,.4);
 }
 = \sum_{r,s}c_{r,s}\
\tikz[very thick,baseline={([yshift=1.0ex]current bounding box.center)}]{
 \draw[black] (0,-.75) node[below] {\small $i$} -- (0,.75) node [midway,fill=black,circle,inner sep=2pt]{}; \node at (.25,0){$r$};
 \draw (1,-.75) node[below] {\small $j$} -- (1,.75) node [midway,fill=black,circle,inner sep=2pt]{}; \node at (1.25,0){$s$};	
	} .
 \]
\end{rem}

We now define a $\bZ\times\bZ$-grading in $\cR(\nu)$. Contrary to~\cite{naissevaz3}, we work with a
single homological degree $\lambda$. The homological nature of this degree is justified by the DG-structure defined in~Section~\ref{sec:dgKLR-diff}.
We declare
\[
\deg\Bigg(
 \tikz[very thick,baseline={([yshift=+.6ex]current bounding box.center)}]{
	  \draw (0,-.5)-- (0,.5) node [midway, fill=black,circle,inner sep=2pt]{};
		 \node at (0,-.75) {$i$};
 	} \Biggr) = (2,\ 0) ,
\qquad
\deg\Biggl(
\tikz[very thick,baseline={([yshift=+.6ex]current bounding box.center)}]{
	  \draw (-.5,-.5)-- (.5,.5);
	  \draw (.5,-.5)-- (-.5,.5);
		 \node at (-.5,-.75){$i$};
		 \node at (.5,-.75){$j$};
 	}
\Biggr) =
\begin{cases}
 (-2,0) & \text{if }\ i=j, \\
 (-1,0) & \text{if }\ h_{i,j} = 1, \\
 (0,0) & \text{otherwise},
\end{cases}
\]
and
\[
\deg\Bigg(\
\tikz[very thick,baseline={([yshift=.5ex]current bounding box.center)},decoration={
 markings, mark=at position 0.5 with {\arrow{>}}}] {
 \draw (-1,-.5) node[below] {\small $i$} -- (-1,.5);
 \node at (.5,0){$\cdots$};
	 \fdot[]{}{-.5,0};
	 	}
\Biggr) = \bigl( -2 ,\ 1 \bigr) ,
\]
where the second grading is called $\lambda$-grading, which we write $\lambda(\bullet)$.
The defining relations of~$\cR(\nu)$ are homogeneous with respect to this bigrading.

\begin{rem}
The subalgebra of $\cR(\nu)$ in $\lambda$-degree zero coincides with the usual KLR algebra~$R(\nu)$ defined in~\cite{KL1} and~\cite{R1}. More precisely, the algebra $R(\nu)$ is defined by the first two types of generators in Definition~\ref{def:KLR-enh} and relations \eqref{eq:KLRR2}--\eqref{eq:R3_2}.
\end{rem}

For $\bi=i_1\dots i_d$, define the idempotent
\[
1_{\bi}\, =
 \tikz[very thick,xscale=1.5,baseline={([yshift=.8ex]current bounding box.center)}]{
  \draw (-.5,-.5) node[below] {\small $i_1$} -- (-.5,.5);
  \draw (0,-.5) node[below] {\small $i_2$} -- (0,.5);
  \node at (0.75,0) {$\dots$};
  \draw (1.5,-.5) node[below] {\small $i_d$} -- (1.5,.5);
 }
\]
and let $\seq(\nu)$ be the set of all ordered sequences
$\bi = i_1i_2 \dots i_d$ with each $i_k \in I$ and $i$ appearing~$\nu_i$ times in the sequence.
For $\bi,\bj\in \seq(\nu)$ the idempotents $1_{\bi}$ and $1_{\bj}$ are orthogonal iff $\bi\neq\bj$,
we have $1_{\cR(\nu)} = \sum_{\bi\in \seq(\nu)}1_{\bi}$, where $1_{\cR(\nu)}$ denotes the identity element
in $\cR(\nu)$, and
\[
\cR(\nu) = \bigoplus\limits_{\bj,\bi\in \seq(\nu)} 1_{\bj}\cR(\nu)1_{\bi} .
\]
Finally, the algebra $\cR$ is defined as
\[
\cR = \bigoplus\limits_{\nu\in\bN[I]}\cR(\nu) .
\]

\subsection[Polynomial action of R(nu)]{Polynomial action of $\boldsymbol{\cR(\nu)}$}\label{ssec:polyaction}

We now describe a faithful action of $\cR(\nu)$ on a supercommutative ring, which was defined in~\cite[Section~3.2]{naissevaz3} and extends the polynomial action of KLR algebras from~\cite[Section~2.3]{KL1}.

 We fix $\nu \in \bN[I]$ with $|\nu| = d$.
 Set $PR_d=\Bbbk[Y_1, \dots, Y_d] \otimes \bV^\bullet \brak{\Omega_1, \dots, \Omega_{d}}$. Now consider
\begin{equation}
\label{eq:PRnu}
PR_\nu=\bigoplus_{\bi\in \seq(\nu)}PR_d 1_\bi.
\end{equation}
 Here we mean that the algebra $PR_\nu$ is a direct sum of copies of the algebra $PR_d$, labelled by~$\seq(\nu)$. We denote by $1_\bi$ the idempotent projecting to the $\bi$th copy.

For each $i\in I$, $1\leq r \leq \nu_i$ and $\bi=(i_1,i_2,\ldots,i_d)\in \Seq(\nu)$, we denote by $r'=r'(r,i,\bi)$ the $r$th index $r'\in\{1,2,\ldots,d\}$ (counting from the left) among the indices such that $i_{r'}=i$. Set~$\omega_{r,i}1_\bi=\Omega_{r'}1_\bi$.

The algebra $PR_\nu$ is bigraded supercommutative with gradings
$\deg(Y_t)=(2,0)$, $\deg(\omega_{r,i})=(-2r,1)$ and $\deg(1_\bi)=(0,0)$, where the variables $\omega_{r,i}$ are odd while the polynomial variables and the idempotents are even.
Note that we consider a $\lambda$-grading that is one half the one considered in~\cite{naissevaz3}.
 This is to agree with the analogous degrees on Hecke algebras in~Section~\ref{ssec:polyrings}.

Now, similarly to \cite[Section~3.2.1]{naissevaz3}, we consider the action of $\Sy_{|\nu|}$ on $PR_\nu$ given by
\[
s_k\colon\ PR_d 1_\bi \rightarrow PR_d 1_{s_k\bi},
\]
sends $Y_{p} 1_\bi \mapsto
Y_{s_k(p)}1_{s_k\bi}$ and
 \begin{align*}
\Omega_{p} 1_\bi &\mapsto \begin{cases}
\left( \Omega_{k} + (Y_{k} - Y_{k+1}) \Omega_{k+1} \right) 1_{\bi} &\text{ if $p = k$ and $i_k = i_{k+1}$}, \\
\Omega_{p} 1_{\bi} &\text{ if $p = k+1$ and $i_k = i_{k+1}$}, \\
\Omega_{s_k(p)} 1_{s_k \bi} &\text{ otherwise.}
\end{cases}
\end{align*}

For each $i,j\in I$, $i\ne j$, we consider the polynomial $\cP_{ij}(u,v)=(u-v)^{h_{i,j}}$, where $h_{i,j}$ denotes as above the number of arrows from $i$ to $j$. Note that we have $\cQ_{i,j}(u,v)=\cP_{i,j}(u,v)\cP_{j,i}(v,u)$.

In the sequel, it is useful to have an algebraic presentation of $\cR(\nu)$ as in~\cite[equations~(1.7)--(1.15)]{BK}.
We set
\[
\tikz[very thick,xscale=1.5,baseline={([yshift=.8ex]current bounding box.center)}]{
  \draw (-.4,-.5) node[below] {\small $i_1$} -- (-.4,.5);
  \node at (0,0) {$\dots$};
  \draw (0.4,-.5) node[below] {\small $i_r$} -- (0.4,.5)node [midway,fill=black,circle,inner sep=2pt]{};
  \node at (0.8,0) {$\dots$};
  \draw (1.2,-.5) node[below] {\small $i_d$} -- (1.2,.5);
 } =\ Y_{r}1_{\bi} ,
\qquad
\tikz[very thick,baseline={([yshift=+.6ex]current bounding box.center)}]{
 \draw (-1.4,-.5) node[below] {\small $i_1$} -- (-1.4,.5);
 \draw (-.5,-.5)-- (.5,.5);
 \draw (.5,-.5)-- (-.5,.5);
 \node at (-.5,-.75){$i_r$};
 \node at (.5,-.75){$i_{r+1}$};
 \draw (1.4,-.5) node[below] {\small $i_d$} -- (1.4,.5);
 \node at (-.9,0) {$\dots$};\node at (.9,0) {$\dots$};
} = \tau_{r}1_{\bi} ,
\qquad
\tikz[very thick,baseline={([yshift=.5ex]current bounding box.center)},
 decoration={markings, mark=at position 0.5 with {\arrow{>}}}] {
 \draw (-1,-.5) node[below] {\small $i_1$} -- (-1,.5);
 \fdot[]{}{-.5,0};
 \draw (0,-.5) node[below] {\small $i_2$} -- (0,.5);
 \node at (.5,0) {$\dots$};
 \draw (1,-.5) node[below] {\small $i_d$} -- (1,.5);
} = \Omega 1_{\bi} .
 \]

We declare that $a \in e_{\bk}\cR(\nu) e_\bj$ acts as zero on $PR_I 1_\bi$ whenever $\bj \neq \bi$. Otherwise
\begin{align*}
&Y_r1_\bi
\longmapsto f1_\bi\mapsto Y_{r}f1_\bi ,
\qquad
\Omega 1_\bi \longmapsto
f1_\bi \mapsto \Omega_1f1_\bi ,
\end{align*}
and
\begin{align*}
&	\tau_r 1_\bi \longmapsto
f 1_\bi \mapsto
\begin{cases}
 \dfrac{f1_\bi - s_{r}(f1_\bi)}{Y_{r} - Y_{r+1}} & \text{if $i_r = i_{r+1}$},
 \\
 \cP_{i_r,i_{r+1}}(Y_{r},Y_{r+1})
 s_{r} (f 1_{\bi}) & \text{if $i_r \ne i_{r+1}$}.
 \\
\end{cases}
\end{align*}

The following is Proposition~3.8 and Theorem 3.15 in~\cite{naissevaz3}.
\begin{prop}
\label{prop:faithrep-KLR-Gr}
The rules above define a faithful action of $\cR(\nu)$ on $PR_\nu$.
\end{prop}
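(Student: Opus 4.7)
The plan is to verify the two claims separately: first, that the formulas respect all defining relations of $\cR(\nu)$, establishing a well-defined action; and second, that this action is faithful, via a normal-form/basis argument.

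For well-definedness, I would check each family of relations in the presentation of $\cR(\nu)$. The relations that do not involve the floating dot --- the quadratic relations \eqref{eq:KLRR2}, the dot-past-crossing relations, and the braid-like relations \eqref{eq:R3_1}--\eqref{eq:R3_2} --- are the usual KLR relations. Since the formulas for the action of $Y_r$ and $\tau_r$ on $PR_\nu$ agree with the standard polynomial representation of the KLR algebra (with a Demazure operator when $i_r=i_{r+1}$ and a $\cP$-twisted transposition otherwise), these verifications are exactly those of Khovanov--Lauda~\cite{KL1} and Rouquier~\cite{R1}, and transfer verbatim. For the relations involving $\Omega$, I observe that $\Omega\cdot 1_\bi$ acts by left multiplication by the exterior variable $\omega_{1,i_1}$; hence $\Omega^2\cdot 1_\bi$ acts as $\omega_{1,i_1}^2=0$, which verifies \eqref{eq:ExtR2}.

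The only nontrivial new verification is the floating-dot slide relation \eqref{eq:extrarel}: both sides must act identically on an arbitrary element $f 1_\bi \in PR_I 1_\bi$. I would compute each side by expanding the outer and inner $\tau$'s according to whether the two labels involved coincide, moving the floating $\omega$-variables through using the $\Sy_{|\nu|}$-action formulas on the $\Omega_r$'s, and checking that the two resulting expressions match with the correct sign coming from the supercommutation of the exterior generators. This step is a direct but bookkeeping-heavy computation and is where I expect the main work to lie, because it is here that the interaction between the nil-Hecke-like formula for $\tau_r$ when $i_r=i_{r+1}$, the $\cP$-twist when $i_r\neq i_{r+1}$, and the deformed $\Sy$-action on the exterior variables all come together.

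For faithfulness, I would use a triangular decomposition of $\cR(\nu)$: every element can be written, in a fixed normal form, as a $\Bbbk$-linear combination of monomials of shape $\tau_w \cdot p(Y_1,\ldots,Y_{|\nu|}) \cdot \Omega_{r_1} \cdots \Omega_{r_k} \cdot 1_\bi$ with $w\in \Sy_{|\nu|}$, $p$ a polynomial, and $r_1 < \cdots < r_k$. Such a spanning statement follows from the defining relations by the same kind of rewriting used for the KLR algebra, enlarged by the moves produced by \eqref{eq:ExtR2} and \eqref{eq:extrarel}. Linear independence of the corresponding operators on $PR_\nu$ would then follow in two steps: first I restrict to the purely polynomial part (no $\Omega$'s), where linear independence is the standard KLR faithfulness theorem; then I bootstrap using the exterior $\bZ$-grading in $\lambda$, noting that terms with different numbers of $\Omega$-factors produce images of different $\lambda$-degree, and that for fixed $\lambda$-degree the distinct multi-indices $(r_1,\ldots,r_k)$ yield operators whose leading terms involve pairwise distinct wedge monomials in the $\omega_{p,i}$'s.
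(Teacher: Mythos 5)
The paper does not prove this statement at all: it is quoted verbatim from \cite{naissevaz3} (Proposition~3.8 for well-definedness and Theorem~3.15 for faithfulness), so any comparison is really with that source. Your first half --- checking the KLR relations against the standard polynomial representation, checking \eqref{eq:ExtR2} from $\omega_{1,i}^2=0$, and doing the bookkeeping for \eqref{eq:extrarel} --- is the right outline and matches what the cited proof does.

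The faithfulness half, however, has a genuine gap: the normal form $\tau_w\, p(Y)\,\Omega_{r_1}\cdots\Omega_{r_k}1_\bi$ with all floating-dot factors collected at one end is not available in $\cR(\nu)$. The floating dot is only a generator adjacent to the \emph{first} strand, and the only way to attach one to strand $r$ inside the algebra is to conjugate by crossings; but unlike the Hecke situation of \fullref{prop:basis-daH}, where $T_r^2=1$ makes $\xi_{r+1}=T_r\xi_rT_r$ behave like $\theta_{r+1}$ up to an invertible leading coefficient (\fullref{lem:xi-acts}), the KLR crossings are nilpotent. Concretely, for $\nu=2i$ the element $\tau_1\Omega\tau_1$ acts on $PR_\nu$ by $f\mapsto -\omega_{2,i}\,\partial_1(f)$: the ``coefficient'' of the leading wedge monomial $\omega_{2,i}$ is the Demazure operator $-\partial_1$, not an invertible element of the polynomial ring, so your leading-term linear-independence step does not apply; and $\tau_1\cdot(\tau_1\Omega\tau_1)=\tau_1^2\Omega\tau_1=0$, so the monomial $\tau_{s_1}\Omega_2$ in your proposed spanning set is simply zero while the algebra does contain a genuinely new element in that slot (namely $\Omega\tau_1$). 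This is exactly why the actual basis, stated immediately after the proposition, has the interleaved form $\tau_w(I)=\tau_{r_k}\cdots\Omega^{(r)}\cdots\tau_{r_1}$ with each floating dot inserted at the point of a left-adjusted reduced expression where the corresponding strand is furthest to the left. To repair your argument you would need to replace your normal form by this one (or prove an equivalent triangularity statement with respect to it), which is the substantive content of Theorem~3.15 of \cite{naissevaz3}.
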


\subsection[Completion of R(nu)]{Completion of $\boldsymbol{\cR(\nu)}$}\label{ssec:complKLR}

We will consider $\PolR_d=\Bbbk[Y_1,Y_2,\ldots,Y_d]$ as a subalgebra of $\cR(\nu)$.
Let $\mathfrak{m}$ be the ideal of $\PolR_d$ generated by all $Y_p$, $1 \leqslant p\leqslant d$.

\begin{defn}
Denote by $\widehat \cR(\nu)$ the completion of the algebra $\cR(\nu)$ at the sequence of ideals~$\cR(\nu)\mathfrak{m}^j\cR(\nu)$. Let \smash{$\widehat{PR}_d=\Bbbk[[Y_1, \dots, Y_d]] \otimes \bV^\bullet \brak{\Omega_1, \dots, \Omega_{d}}$} be the similar completion of $PR_d$ and let \smash{$\widehat{PR}_\nu=\bigoplus_{\bi\in\Seq(\nu)}\widehat{PR}_d1_\bi$} be the similar completion of $PR_\nu$.
\end{defn}

We would like to construct a representation structure of $\widehat \cR(\nu)$ in the vector space $\widehat{PR}_\nu$.
The $\Sy_{|\nu|}$-action on $PR_\nu$ extends obviously to an $\Sy_{|\nu|}$-action on \smash{$\widehat{PR}_\nu$}. Moreover, the action of $\cR(\nu)$ on $PR_\nu$ yields an action of \smash{$\widehat \cR(\nu)$} on \smash{$\widehat{PR}_\nu$}.

\begin{lem}
 The representation $\widehat{PR}_\nu$ of $\widehat \cR(\nu)$ is faithful.
\end{lem}
\begin{proof}
 An explicit $\PolR_d$-basis of $\cR_\nu$ is constructed in \cite[Section~3.2]{naissevaz3}. We would like to check that the same set forms a \smash{$\widehat{\PolR}_d$}-basis of \smash{$\widehat \cR_\nu$}. The fact that this is a spanning set can be proved by the same argument. The linear independence follows from the fact that the elements act on \smash{$\widehat{PR}_\nu$} by linearly independent operators. Then, this proves automatically the faithfulness of the representation.
\end{proof}

\subsection{Cyclotomic KLR algebras}

Let $\Lambda$ be a dominant integral weight of type $\Gamma$ (i.e., for each vertex $i$ of $\Gamma$ we fix a nonnegative integer $\Lambda_i$). Let \smash{$I^\Lambda$} be the 2-sided ideal
of $R(\nu)$ generated by \smash{$Y_1^{\Lambda_{i_1}}1_\bi$} with $\bi\in\Seq(\nu)$. In terms of diagrams, this is the 2-sided ideal
generated by all diagrams of the form
\[
 \tikz[very thick,xscale=1.5,baseline={([yshift=.8ex]current bounding box.center)}]{
  \draw (-.5,-.5) node[below] {\small $i_1$} -- (-.5,.5) node [midway,fill=black,circle,inner sep=2pt]{}; \node at (-.75,0) {$\Lambda_{i_1}$};
  \draw (0,-.5) node[below] {\small $i_2$} -- (0,.5);
  \node at (.75,0){$\cdots$};
  \draw (1.5,-.5) node[below] {\small $i_{|\nu|}$} -- (1.5,.5);
 },
\]
with $\bi \in \Seq(\nu)$.

\begin{defn}
 The \emph{cyclotomic KLR algebra} is the quotient $R^\Lambda(\nu) = R(\nu)/ I^\Lambda$.
 \end{defn}

\subsection[DG-enhancements of R(nu)]{DG-enhancements of $\boldsymbol{\cR(\nu)}$}\label{sec:dgKLR-diff}

We turn $\cR(\nu)$ into a DG-algebra by introducing a differential $d_{\Lambda}$ given by
\begin{gather*}
d_\Lambda(1_\bi)=d_\Lambda(Y_r)=d_\Lambda(\tau_k)=0,
\qquad
d_\Lambda(\Omega 1_\bi)=(-Y_1)^{\Lambda_{i_1}}1_\bi,
\end{gather*}
together with the Leibniz rule
\[
d_\Lambda(ab)=d_\Lambda(a)b + (-1)^{\lambda(a)}d_\Lambda(b).
\]
This algebra is differential graded with respect to the homological degree given by counting the number of floating dots. Since $\frakm$ is in the kernel of $d_\Lambda$, we can extend $d_\Lambda$ to $\widehat \cR(\nu)$.

The following is~\cite[Proposition 4.14]{naissevaz3}.
\begin{prop}\label{prop:formalKLR}
The homology of the DG-algebra $(\cR(\nu),d_\Lambda)$ is concentrated in degree $0$ and is isomorphic to the cyclotomic KLR algebra $R^\Lambda(\nu)$.
\end{prop}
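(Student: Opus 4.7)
The plan is to exhibit an explicit DG-algebra quasi-isomorphism from $(\cR(\nu),d_\Lambda)$ to $(R^\Lambda(\nu),0)$ and then check quasi-isomorphy using the explicit basis of $\cR(\nu)$ together with a Koszul-type argument for the vanishing of higher homology.

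First, I would construct the projection $\pi\colon \cR(\nu)\to R^\Lambda(\nu)$ by sending the KLR generators $Y_r,\tau_k,1_\bi$ to their images in the cyclotomic quotient and sending every floating dot $\Omega 1_\bi$ to zero. This is a well-defined morphism of $\bZ$-graded algebras (the kernel contains only elements built from floating dots, which are the only generators of $\lambda$-degree $\neq 0$) and it intertwines the differentials because $d_\Lambda(\Omega 1_\bi)=(-Y_1)^{\Lambda_{i_1}}1_\bi$ maps to $0$ in $R^\Lambda(\nu)$, while on the KLR generators both sides are zero. Then I would identify $H^0(\cR(\nu),d_\Lambda)$ with $R^\Lambda(\nu)$: the $0$-th degree part of $\cR(\nu)$ is exactly $R(\nu)$ (in the basis this corresponds to $I=\emptyset$), and the image of $d_\Lambda$ on $\lambda$-degree $1$ elements $\tau_w(\{r\})Y^{\und n}$ is generated by expressions of the form $\tau_{r_k}\dotsm\tau_{r_{m+1}}(-Y_{s_{r_m}\dotsm s_{r_1}(r)})^{\Lambda_{i_{\cdots}}}\tau_{r_m}\dotsm\tau_{r_1}$, which after using the relations between the $\tau$'s and the $Y$'s generate exactly the cyclotomic ideal $I^\Lambda$.

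The main obstacle is to show that $H^k(\cR(\nu),d_\Lambda)=0$ for $k>0$. The strategy I would use is to exploit the basis $\{\tau_w(I)Y^{\und n}\}$ from~\cite[Theorem 3.15]{naissevaz3}, which shows that as a $\PolR_\nu$-module $\cR(\nu)$ decomposes as a direct sum indexed by $w\in{}_\bj\Sy_\bi$, with each summand spanned by products $\tau_w(I)$, $I\subseteq\cI$. For fixed $w$ and $\bi$, I would identify this summand, together with the restricted differential $d_\Lambda$, with (a twist of) the Koszul complex $\bigwedge^{\bullet}(\Omega^{(1)},\ldots,\Omega^{(d)})\otimes\Bbbk[Y_1,\ldots,Y_d]$ whose differential sends $\Omega^{(r)}$ to (a unit multiple of) $Y_r^{\Lambda_{i_r}}$. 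The key computation is that $d_\Lambda(\Omega^{(r)})=\tau_{r_m-1}\dotsm\tau_1(-Y_1)^{\Lambda_{i_1}}\tau_1\dotsm\tau_{r_m-1}1_\bi$, which up to lower-order corrections in the Bruhat order equals $(-Y_r)^{\Lambda_{i_r}}$ times a unit (coming from the quadratic polynomials $\cQ_{i,j}$ when the strands cross). Since the $Y_r^{\Lambda_{i_r}}$ form a regular sequence in the polynomial ring, the Koszul complex is a resolution and has cohomology concentrated in degree $0$, equal to $\bigotimes_r \Bbbk[Y_r]/(Y_r^{\Lambda_{i_r}})$.

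To make the lower-order corrections rigorous, I would introduce a filtration on $\cR(\nu)$ by total degree in the $Y$'s (or a suitable Bruhat-type filtration) and pass to the associated graded, where the differential becomes literally the Koszul differential. A spectral sequence argument then forces $H^k=0$ for $k>0$, since the $E_1$ page is concentrated in a single row. The hardest technical point is verifying that all the relations involving $\tau_{r_i}$'s and $\Omega^{(r)}$'s are compatible with this filtration and that the associated graded complex really is a direct sum of Koszul complexes; this is essentially the content of~\cite[Proposition 4.14]{naissevaz3}, whose technicalities come from carefully tracking left-adjusted reduced expressions and the signs introduced by the Leibniz rule on the exterior part.
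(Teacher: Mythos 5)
Note first that the paper does not actually prove this proposition: it is imported wholesale from \cite[Proposition 4.14]{naissevaz3}, and the authors say explicitly (in the introduction, and again in the proof of \fullref{prop:formalKLRcomp}) that the hard part --- the vanishing of the homology in positive degrees --- is technical and will not be reproduced for this reason. So the real question is whether your sketch would work as a proof. Your identification of $H^0(\cR(\nu),d_\Lambda)$ with $R^\Lambda(\nu)$ is fine: by the Leibniz rule the image of $d_\Lambda$ in $\lambda$-degree $0$ is exactly the two-sided ideal generated by the elements $(-Y_1)^{\Lambda_{i_1}}1_\bi$, i.e.\ $I^\Lambda$.

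The gap is in the vanishing of $H^k$ for $k>0$. Your central claim --- that $d_\Lambda(\Omega^{(r)})=\tau_{r_m-1}\dotsm\tau_1(-Y_1)^{\Lambda_{i_1}}\tau_1\dotsm\tau_{r_m-1}1_\bi$ is a unit multiple of $(-Y_r)^{\Lambda_{i_r}}$ plus lower-order corrections, the unit coming from the polynomials $\cQ_{i,j}$ produced by double crossings --- is only valid when every crossing involved separates strands with \emph{distinct} labels. When the two strands being crossed carry the same label, the relevant relation is $\tau^2=0$ rather than $\tau^2=\cQ_{i,j}$, and the would-be leading term disappears entirely. Concretely, take $\nu=2i$ and $\bi=(i,i)$ with $\Lambda_i\geq 1$; the degree-one basis element attached to $r=2$ and $w=e$ is $\tau_1\Omega\tau_1 1_\bi$, and the nilHecke relation $\tau_1Y_1=Y_2\tau_1+1$ gives
\[
d_\Lambda(\tau_1\Omega\tau_1 1_\bi)=\tau_1(-Y_1)^{\Lambda_i}\tau_1 1_\bi=(-1)^{\Lambda_i}\Bigl(\sum_{a+b=\Lambda_i-1}Y_1^{a}Y_2^{b}\Bigr)\tau_1 1_\bi ,
\]
since the term $Y_2^{\Lambda_i}\tau_1^2$ vanishes. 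There is no summand proportional to $Y_2^{\Lambda_i}1_\bi$ at all; worse, the result lies in the $(w,I)=(s_1,\emptyset)$ block of the basis rather than the $(e,\emptyset)$ block. Hence the differential does not preserve the decomposition indexed by $w$, and in the associated graded of any filtration by the length of $w$ (or by $Y$-degree) the component of the differential leaving this element is \emph{zero}. The associated graded complex is therefore not a direct sum of Koszul complexes on regular sequences, the $E_1$ page of your spectral sequence is not concentrated in a single row, and the argument does not close without controlling higher differentials. This failure of the naive Koszul mechanism whenever some $\nu_i\geq 2$ is precisely what makes the proof in \cite{naissevaz3} technical, so you cannot simultaneously rely on the regular-sequence mechanism and defer the remaining details to that reference.
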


\section{The isomorphism theorems}\label{sec:isos}

\subsection{A generalization of the Brundan--Kleshchev--Rouquier isomorphisms}
Choose $I$, $\Gamma$ and $\nu$ as in~Section~\ref{sec:KLRG}. Assume additionally that for $i,j\in I$, $i\ne j$, there is at most one arrow from $i$ to $j$.

Let $\PolR_d$ be as in~Section~\ref{ssec:complKLR}. Set $\PolR_\nu=\bigoplus_{\bi\in\seq(\nu)}\PolR_d 1_\bi$. Here, similarly to \eqref{eq:PRnu}, the element $1_\bi$ is the idempotent projecting to the $\bi$th component of the direct sum.
Let $PA_\nu$ be a $\PolR_\nu$-algebra free over $\PolR_\nu$ (the most interesting examples for us are $PA_\nu=PR_\nu$ and~${PA_\nu=\PolR_\nu}$).
Set also \smash{$\widehat {PA}_\nu=\widehat\PolR_\nu\otimes_{\PolR_\nu}PA_\nu$}.

Fix an action of $\mathfrak{S}_{|\nu|}$ on $\widehat {PA}_\nu$ (by ring automorphisms) that extends the obvious $\mathfrak{S}_{|\nu|}$-action on $\widehat \PolR_\nu$. We assume that such an extension exists.
We make additionally the following assumption.

\begin{Assumption}\label{as1}
For each simple generator $s_r$ of $\mathfrak{S}_{|\nu|}$, each $\bi\in\seq(\nu)$ such that $i_r=i_{r+1}$ and each $f\in \widehat {PA}_\nu$, we have $(f-s_r(f))1_\bi\in (Y_r-Y_{r+1})\widehat {PA}_\nu$.
\end{Assumption}

This assumption implies that the Demazure operator $\frac{1-s_r}{Y_{r}-Y_{r+1}}$ is well defined on $\widehat {PA}_\nu 1_\bi$.
Fix a subalgebra \smash{$\widehat {PA}'_\nu$} of \smash{$\widehat {PA}_\nu$}. Assume now that we have an algebra \smash{$\widehat \cA(\nu)$} that has a faithful representation on $\widehat {PA}_\nu$.
We make the following assumption.

\begin{Assumption}\label{as2}
The action of $\widehat \cA(\nu)$ on $\widehat {PA}_\nu$ is generated by multiplication by elements of~$\widehat {PA}'_\nu$ and by the operators $\tau_r$, $r\in\{1,2,\ldots,|\nu|-1\}$ given by
\begin{itemize}\itemsep=0pt
\item if $i_r=i_{r+1}$, then $\tau_r$ acts on $f1_\bi$ by a (nonzero scalar) multiple of the Demazure operator, i.e., $\tau_r$ sends $f1_\bi$ to a multiple of $\frac{(f - s_{r}(f))1_\bi}{Y_r - Y_{r+1}}$,
\item if $i_r\ne i_{r+1}$, then $\tau_r$ sends $f1_\bi$ to $\cP_{i_r,i_{r+1}}(Y_r,Y_{r+1})s_{r} (f 1_{\bi})$.
\end{itemize}
\end{Assumption}

The goal for this section is to give non-trivial sufficient conditions for an algebra to be isomorphic to $\widehat \cA(\nu)$, generalizing the BKR isomorphism.

The table below summarizes the various rings appearing on the KLR side and on the Hecke side of the picture.
\begin{center}\renewcommand{\arraystretch}{1.3}
\begin{tabular}{|l|l|}
 \hline
 The KLR side & The Hecke side (degenerate version)
 \\ \hline
 $\PolR_\nu =\bigoplus\limits_{\bi\in\Seq(\nu)}\Bbbk[Y_1,\dots ,Y_d]1_{\bi}$
 &
 $\Pol_d =\Bbbk[X_1,\dots ,X_d]$
 \\ \hline
 $PA_\nu$: a $\PolR_\nu$-algebra & $PB_d$: a $\Pol_d$-algebra
 \\ \hline
 $\widehat{\PolR}_\nu =\bigoplus\limits_{\bi\in\Seq(\nu)}\Bbbk[[ Y_1,\dots ,Y_d]]1_{\bi}$
 &
 $\widehat{\Pol}_{\bfa} =\bigoplus\limits_{\bfb\in\Sy_d \bfa}\Bbbk[[X_1-b_1,\dots ,X_d-b_d]] 1_\bfb$
 \\ \hline
 $\widehat{PA}_\nu = \widehat{\PolR}_\nu\otimes_{\PolR_\nu}PA_\nu$
 &
 $\widehat{PB}_{\bfa} =\bigoplus\limits_{\bfb\in\Sy_d \bfa}\bigl(\Bbbk[[X_1-b_1,\dots ,X_d-b_d]]\otimes_{\Pol_d}PB_d \bigr) 1_\bfb$
 \\ \hline
 $\widehat{PA}'_\nu\subseteq \widehat{PA}_\nu$
  &
$\widehat{PB}'_{\bfa}\subseteq \widehat{PB}_{\bfa}$
 \\ \hline
 $\widehat \cA(\nu)=\langle \tau_r,\widehat{PA}'_\nu\rangle \subseteq \End(\widehat{PA}_\nu)$
  &
$\widehat{\bar \cB}_{\bfa}=\langle T_r,\widehat{PB}'_\bfa\rangle \subseteq \End(\widehat{PB}_\bfa)$
 \\ \hline
\end{tabular}
\end{center}
We have only included the degenerate version of the Hecke algebra in the column on the right, the $q$-version being very similar.

\subsubsection{Degenerate version}
\label{sssec:isom-deg-general}

Fix $\bfQ=(Q_1,\ldots,Q_\ell)\in \Bbbk^\ell$, as in~Section~\ref{ssec:DAH}. Now we fix some special choice of $\Gamma$ and $\nu$. Let~$I$ be a subset of $\Bbbk$ that contains $Q_1,\ldots,Q_\ell$.
We construct the quiver $\Gamma$ with the vertex set~$I$ using the following rule: for $i,j\in I$ we have an edge $i\to j$ if and only if we have $j+1=i$. Note that this convention for $\Gamma$ is opposite to \cite{R1}. Let $d$ be a positive integer. Fix $\bfa\in I^d$ (see~Section~\ref{ssec:compl_deg-Hecke-Gr}). Finally, we consider $\nu$ such that $\nu_i$ is the multiplicity of $i$ in $\bfa$. In particular, we see that $|\nu|=d$ is the length of $\bfa$. Note that we have $\Seq(\nu)=\Sy_d \bfa$.

For each $i\in I$, denote by $\Lambda_i$ the multiplicity of $i$ in $(Q_1,\ldots, Q_\ell)$. In particular, this implies~$\prod_{r=1}^\ell(X_1-Q_r)=\prod_{i\in I}(X_1-i)^{\Lambda_i}$.

As above, we set $\Pol_d=\Bbbk[X_1,\cdots,X_d]$. Let $PB_d$ be a $\Pol_d$-algebra free over $\Pol_d$. The most interesting examples are $PB_d=P_d$ and $PB_d=\Pol_d$. Set
 \begin{gather*}
\widehat\Pol_\bfa=\bigoplus_{\bfb\in\Sy_d\bfa}\Bbbk[[X_1-b_1,\ldots,X_d-b_d]]1_\bfb, \\
\widehat {PB}_\bfa=\bigoplus_{\bfb\in\Sy_d\bfa}(\Bbbk[[X_1-b_1,\ldots,X_d-b_d]]\otimes_{\Pol_d} PB_d)1_\bfb.
\end{gather*}
 Then $\widehat {PB}_\bfa$ is a $\widehat\Pol_\bfa$-algebra.

Fix an action of $\mathfrak{S}_{d}$ on $PB_d$ (by ring automorphisms) that extends the obvious $\mathfrak{S}_{d}$-action on~$\Pol_d$. We assume that such an extension exists. We assume additionally the following.

\begin{Assumption}\label{as3}
For each simple generator $s_r$ of $\mathfrak{S}_{d}$ and each $f\in PB_d$, we have \[f-s_r(f)\subseteq (X_r-X_{r+1}) PB_d.\]
\end{Assumption}

In particular, this assumption implies that the Demazure operator $\partial_r=\frac{1-s_r}{X_{r}-X_{r+1}}$ is well defined on $PB_d$. The action of $\Sy_d$ on $\Pol_d$ and $PB_d$ can be obviously extended to an action on $\widehat\Pol_\bfa$ and $\widehat{PB}_\bfa$.
Fix a subalgebra $\widehat{PB}'_\bfa$ of $\widehat{PB}_\bfa$. We make the following assumption.

\begin{Assumption}\label{as4}
There is an algebra $\widehat{\bar \cB}_\bfa$ that has a faithful representation in $\widehat{PB}_\bfa$ that is generated by multiplication by elements of $\widehat{PB}'_\bfa$ and by the operators $T_r=s_r-\partial_r$.
\end{Assumption}

By construction, we have the isomorphism
\begin{equation}
\label{eq:isom-Pol-degen}
\widehat \PolR_\nu\simeq \widehat \Pol_\bfa, \quad Y_r 1_\bi\mapsto (X_r-i_r) 1_\bi.
\end{equation}
Moreover, this isomorphism commutes with the action of $\Sy_{d}$. We assume the following.

\begin{Assumption}\label{as5}
 We can extend the isomorphism $\widehat\PolR_\nu\simeq \widehat \Pol_\bfa$ in \eqref{eq:isom-Pol-degen} to an $\Sy_d$-invariant isomorphism $\widehat {PA}_\nu\simeq\widehat {PB}_\bfa$. This extension restricts to an isomorphism $\widehat {RA}'_\nu\simeq\widehat {PB}'_\bfa$.
\end{Assumption}

We get the following proposition (if the Assumptions \ref{as1}--\ref{as5} are satisfied).
\begin{prop}
\label{prop:isom-deg-general}
There is an algebra isomorphism \smash{$\widehat \cA(\nu)\simeq \widehat {\bar \cB}_\bfa$} that intertwines the representation in \smash{$\widehat {PA}_\nu\simeq \widehat {PB}_\bfa$}.
\end{prop}

\begin{proof}
We only have to show that we can write the operator $\tau_r$ in terms of $T_r$ (and multiplication by elements of \smash{$\widehat {PA}'_\nu\simeq \widehat {PB}'_\bfa$}) and vice versa.

First of all, note that the element $(Y_r-Y_{r+1}+c)\in\Bbbk[[Y_1,\ldots,Y_d]]$ is invertible for each nonzero $c\in \Bbbk$ and that its inverse is $c^{-1}\big(\sum_{n\geq 0}c^{-n}(Y_{r+1}-Y_r)\big)$. Now, since we have \[(X_r-X_{r+1})1_\bi=(Y_r-Y_{r+1}+i_r-i_{r+1})\] under the isomorphism \smash{$\widehat\PolR_\nu\simeq \widehat \Pol_\bfa$}, we see that the element $(X_r-X_{r+1})^{-1}1_\bi\in \widehat \Pol_\bfa$ is well defined if $i_r\ne i_{r+1}$ and the element $(X_r-X_{r+1}+1)^{-1}1_\bi\in \Pol_\bfa$ is well defined if $i_r+1\ne i_{r+1}$.

First, we express $\tau_r$ in terms of $T_r$. We can rewrite the operator $T_r$ in the following way:
\[
T_r=1+\frac{X_r-X_{r+1}+1}{X_r-X_{r+1}}(s_r-1).
\]
Fix $\bi\in \seq(\nu)=\Sy_d \bfa$.
Assume $i_r=i_{r+1}$.
Then the action of the operator $(X_r-X_{r+1}+1)^{-1} 1_\bi$ on \smash{$\widehat {PA}_\nu\simeq\widehat {PB}_\bfa$} is well defined. The element $-(X_r-X_{r+1}+1)^{-1}(T_r-1)1_\bi$ acts on \smash{$\widehat {PB}_\bfa$} by the same operator as~$\tau_r 1_\bi$.

Now, assume that we have $i_r\ne i_{r+1}$. If additionally we have no arrow $i_{r}\to i_{r+1}$,
we can write $s_r 1_\bi=\big(\frac{X_r-X_{r+1}}{X_r-X_{r+1}+1}(T_r-1)+1\big) 1_\bi$. We need the condition $i_{r+1}+1\ne i_{r}$ to be able to divide by $(X_r-X_{r+1}+1)$ here. The operator $s_r 1_\bi$ acts on \smash{$\widehat {PA}_\nu\simeq \widehat {PB}_\bfa$} in the same way as $\tau_r 1_\bi$.
Finally, if we have $i_{r}\to i_{r+1}$, then the operator \[(X_r-X_{r+1}+1)s_r 1_\bi=[(X_r-X_{r+1})(T_r-1)+(X_r-X_{r+1}+1)] 1_\bi\] acts on $\widehat {PB}_\bfa$ in the same way as $\tau_r 1_\bi$.

Now, we express $T_r$ in terms of $\tau_r$. The operator $T_r 1_\bi$ acts by $\big[1+\frac{(X_r-X_{r+1}+1)}{X_r-X_{r+1}}(s_r-1)\big] 1_\bi$. In the case $i_r\ne i_{r+1}$, we are allowed to divide by $X_r-X_{r+1}$ here. If we additionally have no arrow $i_r\to i_{r+1}$, then the element $s_r 1_\bi$ acts in the same way as $\tau_r 1_\bi$. If we have an arrow $i_r\to i_{r+1}$, then $(X_r-X_{r+1}+1)s_r 1_\bi$ acts in the same way as $\tau_r 1_\bi$. It remains to treat the case $i_r=i_{r+1}$. In this case, the element $\frac{s_r-1}{X_r-X_{r+1}}$ acts in the same way as $-\tau_r 1_\bi$.
\end{proof}

\subsubsection[q-version]{$\boldsymbol{q}$-version}
\label{sssec:isom-q-general}

Fix $q\in\Bbbk$, $q\ne 0,1$. Fix also $\bfQ=(Q_1,\ldots,Q_\ell)\in (\Bbbk^\times)^\ell$, as in~Section~\ref{ssec:AH}. Now we fix some special choice of $\Gamma$ and $\nu$. Let $I$ be a subset of $\Bbbk^\times$ that contains $Q_1,\ldots,Q_\ell$. We construct the quiver $\Gamma$ with the vertex set $I$ using the following rule: for $i,j\in I$ we have an edge $i\to j$ if and only if we have $qj=i$. Note that this convention for $\Gamma$ is opposite to \cite{MaksimauStroppel} and \cite{R1}. Fix $\bfa\in I^d$ (see~Section~\ref{ssec:compl-affHeckGr}). Finally, we consider $\nu$ such that $\nu_i$ is the multiplicity of $i$ in $\bfa$. In particular, we see that $|\nu|=d$ is the length of $\bfa$. Note that we have $\Seq(\nu)=\Sy_d \bfa$.
As in the degenerate case, for each $i\in I$ we denote by $\Lambda_i$ the multiplicity of $i$ in $(Q_1,\ldots, Q_\ell)$.

Set \smash{$\Poll_d=\Bbbk\big[X^{\pm 1}_1,\cdots,X^{\pm 1}_d\big]$}. Let $PB_d$ be a $\Poll_d$-algebra, free over $\Poll_d$. The most interesting examples are $PB_d=P_d$ and $PB_d=\Poll_d$. Set $\widehat\Pol_\bfa=\bigoplus_{\bfb\in\Sy_d\bfa}\Bbbk[[X_1-b_1,\ldots,X_d-b_d]]1_\bfb$ and \smash{$\widehat {PB}_\bfa=\bigoplus_{\bfb\in\Sy_d\bfa}(\Bbbk[[X_1-b_1,\ldots,X_d-b_d]]\otimes_{\Poll_d} PB_d)1_\bfb$}. Then \smash{$\widehat{PB}_\bfa$} is a \smash{$\widehat\Pol_\bfa$}-algebra.

Fix an action of $\mathfrak{S}_{d}$ on $PB_d$ (by ring automorphisms) that extends the obvious $\mathfrak{S}_{d}$-action on $\Poll$. We assume additionally the following.

\begin{Assumption}\label{as6}
For each simple generator $s_r$ of $\mathfrak{S}_{d}$ and each $f\in PB_d$, we have \[f-s_r(f)\subseteq (X_r-X_{r+1}) Pl_d.\]
\end{Assumption}

In particular, this assumption implies that the Demazure operator $\frac{1-s_r}{X_{r}-X_{r+1}}$ is well defined on $Pl_d$. The action of $\Sy_d$ on $\Poll_d$ and $Pl_d$ can be obviously extended to an action on \smash{$\widehat\Poll_\bfa$} and~\smash{$\widehat {PB}_\bfa$}.

Fix a subalgebra $\widehat {PB}'_\bfa$ of $\widehat {PB}_\bfa$. We make the following assumption.

\begin{Assumption}\label{as7}
There is an algebra $\widehat \cB_\bfa$ that has a faithful representation in $\widehat {PB}_\bfa$ that is generated by multiplication by elements of \smash{$\widehat {PB}'_\bfa$} and by the operators
\[
T_r=q+\frac{(qX_r-X_{r+1})}{X_r-X_{r+1}}(s_r-1).
\]
\end{Assumption}

By construction, we have the isomorphism
\begin{equation}
\label{eq:isom-Pol-q}
\widehat \PolR_\nu\simeq \widehat \Pol_\bfa, \qquad Y_r 1_\bi\mapsto i_r^{-1}(X_r-i_r) 1_\bi.
\end{equation}
Moreover, this isomorphism commutes with the action of $\Sy_{d}$. We assume the following.

\begin{Assumption}\label{as8}
We can extend the isomorphism \smash{$\widehat\PolR_\nu\simeq \widehat \Pol_\bfa$} in \eqref{eq:isom-Pol-q} to an $\Sy_d$-invariant isomorphism \smash{$\widehat {PA}_\nu\simeq\widehat {PB}_\bfa$}. This extension restricts to an isomorphism \smash{$\widehat {PA}'_\nu\simeq\widehat {PB}'_\bfa$}.
\end{Assumption}

Then we have the following (if Assumptions~\ref{as1}, \ref{as2}, \ref{as6}, \ref{as7}, \ref{as8} are satisfied).

\begin{prop}
\label{prop:isom-q-general}
There is an algebra isomorphism $\widehat \cA(\nu)\simeq \widehat \cB_\bfa$ that intertwines the representation in \smash{$\widehat {PA}_\nu\simeq \widehat {PB}_\bfa$}.
\end{prop}
\begin{proof}
We only have to show that we can write the operator $\tau_r$ in terms of $T_r$ (and multiplication by elements of \smash{$\widehat {PA}'_\nu\simeq \widehat {PB}'_\bfa$}) and vice versa.
First, we express $\tau_r$ in terms of $T_r$.
Fix $\bi\in \seq(\nu)=\Sy_d \bfa$.

Assume $i_r=i_{r+1}$.
Then the action of the operator $(qX_r-X_{r+1})^{-1} 1_\bi$ on $\widehat {PA}_\nu\simeq\widehat {PB}_\bfa$ is well defined. The element $-(qX_r-X_{r+1})^{-1}(T_r-q)1_\bi$ acts on \smash{$\widehat {PB}_\bfa$} by the same operator as~$\tau_r 1_\bi$.

Now, assume that we have $i_r\ne i_{r+1}$. If moreover we have no arrow $i_{r}\to i_{r+1}$, we can write $s_r 1_\bi=\big(\frac{X_r-X_{r+1}}{qX_r-X_{r+1}}(T_r-q)+1\big) 1_\bi$ (we need the condition $qi_{r+1}\ne i_{r}$ to be able to divide by $(qX_r-X_{r+1})$ here). The operator $s_r 1_\bi$ acts on \smash{$\widehat {PA}_\nu\simeq \widehat {PB}_\bfa$} in the same way as $\tau_r 1_\bi$.
Finally, if we have $i_{r}\to i_{r+1}$, then the operator $(qX_r-X_{r+1})s_r 1_\bi=[(X_r-X_{r+1})(T_r-q)+(qX_r-X_{r+1})] 1_\bi$ acts on \smash{$\widehat {PB}_\bfa$} in the same way as $\tau_r 1_\bi$ up to scalar.

Now, we express $T_r$ in terms of $\tau_r$. The operator $T_r 1_\bi$ acts by $\big[q+\frac{(qX_r-X_{r+1})}{X_r-X_{r+1}}(s_r-1)\big] 1_\bi$. In the case $i_r\ne i_{r+1}$, we are allowed to divide by $X_r-X_{r+1}$ here. If we additionally have no arrow~${i_r\to i_{r+1}}$, then the element $s_r 1_\bi$ acts in the same way as $\tau_r 1_\bi$. If we have an arrow~${i_r\to i_{r+1}}$, then $(qX_r-X_{r+1})s_r 1_\bi$ acts up to scalar in the same way as $\tau_r 1_\bi$. It remains to treat the case~${i_r=i_{r+1}}$. In this case, the element $\frac{s_r-1}{X_r-X_{r+1}}$ acts in the same way as $-\tau_r 1_\bi$.
\end{proof}

\subsection{The DG-enhanced isomorphism theorem: the degenerate version}
\label{subs:DG-isom-degen}

In~Proposition~\ref{prop:isom-deg-general}, we proved that we have an isomorphism of algebras \smash{$\widehat\cA(\nu)\simeq \widehat {\bar \cB}_\bfa$} for some algebras \smash{$\widehat \cA(\nu)$} and \smash{$\widehat {\bar \cB}_\bfa$} that satisfy some list of properties. Let us show that we can apply~Proposition~\ref{prop:isom-q-general} to the special situation \smash{$\widehat \cA(\nu)=\widehat \cR(\nu)$} and \smash{$\widehat {\bar \cB}_\bfa=\widehat {\bar \cH}_\bfa$}. We assume that $\nu$ and $\bfa$ are related as in~Section~\ref{sssec:isom-deg-general}. In this case we can take \smash{$\widehat {PA}_\nu=\widehat {PR}_\nu$} and \smash{$\widehat {PB}_\bfa=\widehat {P}_\bfa$}. We consider the subalgebra \smash{$\widehat {PA}'_\nu$} of \smash{$\widehat {PA}_\nu$} generated by \smash{$\widehat\PolR_\nu$} and \smash{$\Omega_1$}, and the subalgebra \smash{$\widehat {PB}'_\bfa$} of \smash{$\widehat {PB}_\bfa$} generated by \smash{$\widehat\Pol_\bfa$} and $\theta_1$.

To be able to apply~Proposition~\ref{prop:isom-deg-general}, we only have to construct a $\Sy_d$-invariant isomorphism~\smash{${\alpha\colon \widehat {P}_\bfa\simeq\widehat {PR}_\nu}$} extending the isomorphism \eqref{eq:isom-Pol-degen} such that $\alpha$ restricts to an isomorphism \smash{$\widehat {PB}'_\bfa\simeq\widehat {PA}'_\nu$}. First, we consider the following homomorphism $\alpha'\colon\widehat \Pol_\bfa\to\widehat {PR}_\nu$.
\begin{gather*}
1_\bi\mapsto 1_\bi,\qquad
X_r1_\bi\mapsto (Y_r+i_r)1_\bi.
\end{gather*}
This homomorphism is obviously $\Sy_d$-invariant.

\begin{rem}
For each $1\leqslant r<d$, the Demazure operator $\partial_r=\frac{1-s_r}{X_r-X_{r+1}}$ is well defined on~\smash{$\widehat P_\bfa$}. Now, using the isomorphism \smash{$\widehat \Pol_\bfa\simeq\widehat {\PolR}_\nu$}, we can consider it as an operator on \smash{$\widehat {PR}_\nu$}. The action of $\partial_r$ on \smash{$\widehat {PR}_\nu$} can be given explicitly by
\[
\partial_r(f1_\bi)=\frac{f1_\bi-s_r(f)1_{s_r(\bi)}}{Y_r-Y_{r+1}+i_r-i_{r+1}}, \qquad f\in\Bbbk[[Y_1,\ldots,Y_d]].
\]
Attention, the operator $\partial_r$ on $\widehat {PR}_\nu$ should not be confused with $\frac{1-s_r}{Y_r-Y_{r+1}}$, which is not well defined.
The Demazure operators $\partial_r$ on \smash{$\widehat {PR}_\nu$} satisfy relations~\eqref{eq:DemazureR23}, \eqref{eq:DemazureComm}, \eqref{eq:DemazureX}.
\end{rem}

Now, we want to extend $\alpha'$ to a homomorphism $\alpha\colon \widehat P_\bfa\to\widehat {PR}_\nu$. To do this, we have to choose the images of $\theta_1,\theta_2,\ldots,\theta_d$ in $\widehat {PR}_\nu$ such that these images anticommute with each other and commute with the image of $\widehat\Pol_\bfa$ (i.e., with $\widehat \PolR_\nu$). Moreover, we want to make this choice in such a way that $\alpha$ is bijective and $\Sy_d$-invariant.

First, we set
\begin{equation}
\label{eq:base-theta1}
\alpha(\theta_11_\bi)= \bigg(\prod_{i\in I,i\ne i_1}(Y_1+i_1-i)^{\Lambda_{i}}\bigg)(-1)^{\Lambda_{i_1}}\Omega_11_\bi.
\end{equation}
This choice is motivated by the fact that we will want $\alpha$ to be compatible with the DG-structure. For $r>1$, we construct the images of other $\theta_r$ in the following way
\begin{equation}
\label{eq:induc-thetar}
\alpha(\theta_{r})=(-1)^{r-1}\partial_{r-1}\cdots\partial_{2}\partial_{1}(\alpha(\theta_{1})).
\end{equation}
This choice is motivated by the fact that we want $\alpha$ to be $\Sy_d$-invariant and we have that $\theta_{r}=-\partial_{r-1}(\theta_{r-1})$.
Since we have $s_r=1-(X_r-X_{r+1})\partial_r$, equation \eqref{eq:induc-thetar} implies immediately
\begin{equation}
\label{eq:comm-sr-thr}
\alpha(s_r(\theta_r))=s_r(\alpha(\theta_r)).
\end{equation}

\begin{lem}
\label{lem:alpha-isom}
The homomorphism $\alpha\colon \widehat P_\bfa\to\widehat {PR}_\nu$ given by \eqref{eq:base-theta1} and \eqref{eq:induc-thetar} is an isomorphism and it is $\Sy_d$-invariant.
\end{lem}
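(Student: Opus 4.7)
I would establish the lemma in three stages: (i)~$\alpha$ is a well-defined algebra homomorphism, (ii)~$\alpha$ is $\Sy_d$-equivariant, and (iii)~$\alpha$ is bijective.

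For well-definedness, let $\widehat{PR}_\nu^{(1)} \subset \widehat{PR}_\nu$ denote the $\Omega$-degree-one submodule, spanned over $\widehat\PolR_\nu$ by $\Omega_1, \ldots, \Omega_d$ on each idempotent block. The key claim is that each $\alpha(\theta_r)$ lies in $\widehat{PR}_\nu^{(1)}$, which I would prove by induction on~$r$ using~\eqref{eq:induc-thetar}. The base case follows from~\eqref{eq:base-theta1}, and for the step one checks that the division by $Y_{r-1}-Y_r+i_{r-1}-i_r$ is legitimate: when $i_{r-1}\ne i_r$ the denominator is a unit of $\widehat\PolR_\nu$, while when $i_{r-1}=i_r$ the numerator $(s_{r-1}-1)\alpha(\theta_{r-1})$ lies in $(Y_{r-1}-Y_r)\widehat{PR}_\nu$ by the standard divisibility property of Demazure operators combined with the explicit formula for the $\Sy_d$-action on the $\Omega$'s. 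Since $s_{r-1}$ preserves the $\Omega$-degree, the quotient remains in $\widehat{PR}_\nu^{(1)}$. Once each $\alpha(\theta_r) \in \widehat{PR}_\nu^{(1)}$, the exterior relations $\alpha(\theta_r)^2 = 0$ and $\alpha(\theta_r)\alpha(\theta_s) + \alpha(\theta_s)\alpha(\theta_r) = 0$ hold automatically, since any element of $\widehat{PR}_\nu^{(1)}$ squares to zero (polynomial coefficients commute, $\Omega$'s anti-commute); commutation with $\widehat\PolR_\nu$ is automatic as the latter is central in the even part of $\widehat{PR}_\nu$.

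For $\Sy_d$-equivariance, the recursion~\eqref{eq:induc-thetar} is designed precisely to mimic the identity $\theta_r=-\partial_{r-1}(\theta_{r-1})$ in $\widehat P_\bfa$, which already yields~\eqref{eq:comm-sr-thr}, i.e.\ the $s_r$-invariance on~$\theta_r$. To extend this to an arbitrary simple reflection $s_k$ I would induct on~$r$, distinguishing cases according to the position of $k$: the case $k=r-1$ uses $s_{r-1}\partial_{r-1}=\partial_{r-1}$; the cases $|k-(r-1)|>1$ use the commutation $s_k\partial_{r-1}=\partial_{r-1}s_k$ and the inductive hypothesis on $\theta_{r-1}$; the adjacent cases require the braid relation $\partial_{r-1}\partial_r\partial_{r-1}=\partial_r\partial_{r-1}\partial_r$ combined with the inductive hypothesis.

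For bijectivity, both $\widehat P_\bfa$ and $\widehat{PR}_\nu$ are free $\widehat\PolR_\nu$-modules of rank $2^d$ on each idempotent block, each carrying compatible exterior-algebra structure, so it suffices to show that $\alpha(\theta_1),\ldots,\alpha(\theta_d)$ form a $\widehat\PolR_\nu$-basis of $\widehat{PR}_\nu^{(1)}$. By Nakayama together with completeness of $\widehat\PolR_\nu$, this can be checked after reducing modulo the maximal ideal $\mathfrak{m}$. The base case gives $\bar\alpha(\theta_1 1_\bi)=(-1)^{\Lambda_{i_1}}\prod_{i\ne i_1}(i_1-i)^{\Lambda_i}\Omega_1 1_\bi$, a nonzero multiple of $\Omega_1 1_\bi$ because the vertex set $I\subset\Bbbk$ consists of distinct elements. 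An inductive computation using~\eqref{eq:induc-thetar} mod~$\mathfrak{m}$ should then show that $\bar\alpha(\theta_r 1_\bi)$ equals a nonzero scalar multiple of $\Omega_r 1_\bi$ plus terms supported on $\Omega_k$ with $k<r$, giving an upper-triangular transition matrix with units on the diagonal, and hence bijectivity. The main obstacle is precisely this last computation: the Demazure-type operator on $\widehat{PR}_\nu$ entangles with the $\Sy_d$-action on the $\Omega$'s, whose form depends on whether the neighboring labels $i_{r-1}$ and $i_r$ coincide, so tracking $\bar\alpha(\theta_r)$ demands a careful case-by-case analysis.
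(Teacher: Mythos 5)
Your proposal is correct and follows essentially the same route as the paper: $\Sy_d$-equivariance by induction on $r$ with a case split on $k$, where the only nontrivial case is handled by the braid relation for Demazure operators together with the inductive vanishing $\partial(\alpha(\theta))=0$, and bijectivity via the triangularity of $\alpha(\theta_r 1_\bi)=\sum_{t\le r}P_t\Omega_t 1_\bi$ with invertible leading coefficient. Your explicit well-definedness check (that the $\alpha(\theta_r)$ stay in the $\Omega$-degree-one part, so the exterior relations hold automatically) is a worthwhile addition that the paper leaves implicit, but it does not change the substance of the argument.
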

\begin{proof}
Since the homomorphism $\alpha'\colon\widehat \Pol_\bfa\to\widehat {PR}_\nu$ is obviously $\Sy_d$-invariant, to show the $\Sy_d$-invariance of $\alpha$, we have to show
\begin{equation}
\label{eq:Sd-invar}
s_k(\alpha(\theta_r1_\bi))=\alpha(s_k(\theta_r1_\bi))
\end{equation}
for each $\bi\in \Seq(\nu)$, each $r\in[1;d]$ and each $k\in[1;d-1]$.
We give a proof by induction on $r$. First, we prove \eqref{eq:Sd-invar} for $r=1$. If $k>1$ and $r=1$, then \eqref{eq:Sd-invar} is obvious because $\theta_1$ and $\alpha(\theta_1)$ are $s_k$-invariant. The case $k=r=1$ follows from \eqref{eq:comm-sr-thr}.

Now, assume that $r>1$ and that \eqref{eq:Sd-invar} is already proved for smaller values of $r$. The case~${k=r}$ follows from \eqref{eq:comm-sr-thr}.

For $k\ne r$, the element $\theta_r$ is $s_k$-invariant. So \eqref{eq:Sd-invar} is equivalent to the $s_k$-invariance of $\alpha(\theta_r)$.

Assume that $k>r$ or $k<r-2$. This assumption implies that $s_k$ commutes with $s_{r-1}$. Moreover, we already know by induction hypothesis that $\alpha(\theta_{r-1})$ is $s_{k}$-invariant. So, the $s_k$-invariance of $\alpha(\theta_{r-1})$ together with \eqref{eq:induc-thetar} implies the $s_k$-invariance of $\alpha(\theta_r)$.

Now, assume $k=r-1$. In this case the $s_{r-1}$-invariance of $\alpha(\theta_r)$ is obvious from \eqref{eq:induc-thetar}.

Finally, assume $k=r-2$. To prove the $s_{r-2}$-invariance of $\alpha(\theta_r)$, we have to show that $\partial_{r-2}(\alpha(\theta_r))=0$.
We have
\[
\partial_{r-2}(\alpha(\theta_r))=\partial_{r-2}\partial_{r-1}\partial_{r-2}(\alpha(\theta_{r-2}))=\partial_{r-1}\partial_{r-2}\partial_{r-1}(\alpha(\theta_{r-2})).
\]
This is equal to zero because $\partial_{r-1}(\alpha(\theta_{r-2}))=0$ by the $s_{r-1}$-invariance of $\alpha(\theta_{r-2})$.

This completes the proof of the $\Sy_d$-invariance of $\alpha$.

Now, let us prove that $\alpha$ is an isomorphism. It is easy to see from \eqref{eq:base-theta1} and \eqref{eq:induc-thetar} that $\alpha(\theta_r1_\bi)$ is of the form
\begin{equation}
\label{eq:triang-theta-Omega}
\alpha(\theta_r1_\bi)=\sum_{t=1}^rP_t\Omega_t1_\bi,
\end{equation}
where $P_t\in \widehat {PR}_\nu1_\bi$ for $r\in\{1,2,\ldots,r\}$ and $P_r$ is invertible in $\widehat {PR}_\nu1_\bi$. Then the bijectivity is clear from \eqref{eq:triang-theta-Omega} and from the fact that $\alpha$ restricts to a bijection
$\widehat \Pol_\bfa\simeq \widehat\PolR_\nu$.
\end{proof}

We get the following theorem.
\begin{thm}\label{thm:isoEDHecke}
There is an isomorphism of DG-algebras $(\widehat \cR(\nu),d_\Lambda)\simeq (\widehat {\bar \cH}_\bfa,\partial_\bfQ)$.
\end{thm}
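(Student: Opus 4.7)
The plan is to take the algebra isomorphism $\varphi \colon \widehat\cR(\nu) \xrightarrow{\sim} \widehat{\bar\cH}_\bfa$ already produced in \fullref{subs:DG-isom-degen} via \fullref{prop:isom-deg-general} and upgrade it to an isomorphism of DG-algebras by showing $\varphi \circ d_\Lambda = \partial_\bfQ \circ \varphi$. Since both $d_\Lambda$ and $\partial_\bfQ$ are graded derivations, it suffices to verify this identity on a generating set of $\widehat\cR(\nu)$, namely the idempotents $1_\bi$, the dots $Y_r 1_\bi$, the crossings $\tau_r 1_\bi$, and the floating dot $\Omega 1_\bi$.

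For the $\lambda$-degree zero generators $1_\bi$, $Y_r 1_\bi$ and $\tau_r 1_\bi$, I would observe that $\varphi$ sends them into the completed degenerate affine Hecke subalgebra $\widehat{\bar H}_\bfa \subseteq \widehat{\bar\cH}_\bfa$: the identification \eqref{eq:isom-Pol-degen} gives $\varphi(Y_r 1_\bi) = (X_r - i_r) 1_\bi$, and the formulas from the proof of \fullref{prop:isom-deg-general} express $\varphi(\tau_r 1_\bi)$ as a rational combination of $T_r$ and polynomial elements (well-defined after completion). The differential $d_\Lambda$ vanishes on these generators by construction, and $\partial_\bfQ$ vanishes on their images because it is zero on $\bar H_d$ and extends continuously to the completion. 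Both sides of the identity therefore vanish trivially in these cases.

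The only substantial check is on the floating dot $\Omega 1_\bi$. Using that $\Omega 1_\bi$ acts on $\widehat{PR}_\nu$ as left multiplication by $\Omega_1 1_\bi$ and that $\alpha(\theta_1 1_\bi) = (-1)^{\Lambda_{i_1}}\prod_{i \ne i_1}(Y_1 + i_1 - i)^{\Lambda_i} \Omega_1 1_\bi$ from \eqref{eq:base-theta1}, together with the fact that the polynomial $\prod_{i \ne i_1}(Y_1 + i_1 - i)^{\Lambda_i}$ has nonzero constant term $\prod_{i \ne i_1}(i_1 - i)^{\Lambda_i}$ and is therefore invertible in $\widehat{\PolR}_\nu 1_\bi$, I would identify
\[
\varphi(\Omega 1_\bi) \;=\; (-1)^{\Lambda_{i_1}} \Bigl( \prod_{i \ne i_1}(X_1 - i)^{\Lambda_i} \Bigr)^{-1} \theta \cdot 1_\bi \;\in\; \widehat{\bar\cH}_\bfa.
\]
Applying $\partial_\bfQ$ via the Leibniz rule, the polynomial prefactor lies in the kernel of $\partial_\bfQ$, so only the term where $\partial_\bfQ$ hits $\theta$ contributes; using $\partial_\bfQ(\theta) = \prod_{i \in I}(X_1 - i)^{\Lambda_i}$ the factors with $i \ne i_1$ cancel and we obtain $(-1)^{\Lambda_{i_1}}(X_1 - i_1)^{\Lambda_{i_1}} 1_\bi$, which is exactly $\varphi((-Y_1)^{\Lambda_{i_1}} 1_\bi) = \varphi(d_\Lambda(\Omega 1_\bi))$. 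This is the decisive calculation; the sign $(-1)^{\Lambda_{i_1}}$ and the precise polynomial prefactor appearing in \eqref{eq:base-theta1} were chosen exactly so that this matching takes place, so once the algebra isomorphism is in hand the DG-compatibility follows without further obstacle.
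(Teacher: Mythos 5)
Your proposal is correct and follows essentially the same route as the paper: reduce the DG-compatibility of the algebra isomorphism from \fullref{prop:isom-deg-general} to a check on generators, where the degree-zero generators are trivial and the only substantive verification is the one built into the choice of \eqref{eq:base-theta1}. The sole cosmetic difference is that you verify the intertwining on the KLR generator $\Omega 1_\bi$ whereas the paper checks it on $\theta$; these are the same computation read in opposite directions.
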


\begin{proof}
Note that \eqref{eq:base-theta1} implies that the isomorphism $\alpha$ (see Lemma~\ref{lem:alpha-isom}) identifies the subalgebra~\smash{$\widehat {PA}'_\nu$} of \smash{$\widehat {PA}_\nu$} with the subalgebra \smash{$\widehat {PB}'_\bfa$} of \smash{$\widehat {PB}_\bfa$}. Then the isomorphism of algebras follows immediately from~Proposition~\ref{prop:isom-deg-general}. We only have to check the DG-invariance.

Denote by $\gamma$ the isomorphism of algebras \smash{$\gamma\colon \widehat {\bar \cH}_\bfa\to \widehat \cR(\nu)$}. It is obvious that $\gamma$ preserves the $\lambda$-grading.
We claim that for each \smash{$h\in \widehat {\bar \cH}_\bfa$}, we have
\begin{equation}
\label{eq:dg-invar}
\gamma(\partial_\bfQ(h))=d_\Lambda(\gamma(h)).
\end{equation}
Indeed, it is enough to check \eqref{eq:dg-invar} for $h=\theta$. This follows directly from \eqref{eq:base-theta1}. In fact, this is exactly the reason why we define \eqref{eq:base-theta1} in such a way.
\end{proof}

\begin{rem}
We could also take \smash{$\widehat {PA}_\nu=\widehat {PA}'_\nu=\widehat {\PolR}_\nu$} and \smash{$\widehat {PB}_\bfa=\widehat {PB}'_\bfa=\widehat \Pol_\bfa$}. Then we get (the completion version of) the usual Brundan--Kleshchev--Rouquier isomorphism.
\end{rem}

\subsection[The DG-enhanced isomorphism theorem: the q-version]{The DG-enhanced isomorphism theorem: the $\boldsymbol{q}$-version}
\label{sec:isoqversion}

In~Proposition~\ref{prop:isom-deg-general}, we proved that we have an isomorphism of algebras \smash{$\widehat \cA(\nu)\simeq \widehat {\cB}_\bfa$} for some algebras \smash{$\widehat \cA(\nu)$} and \smash{$\widehat {\cB}_\bfa$} that satisfy some list of properties. Let us show that we can apply~Proposition~\ref{prop:isom-q-general} to the special situation \smash{$\widehat \cA(\nu)=\widehat \cR(\nu)$} and \smash{$\widehat {\cB}_\bfa=\widehat {\cH}_\bfa$}. We assume that $\nu$ and $\bfa$ are related as in~Section~\ref{sssec:isom-q-general}. In this case, we can take \smash{$\widehat {PA}_\nu=\widehat {PR}_\nu$} and \smash{$\widehat {PB}_\bfa=\widehat P_\bfa$}.

To be able to apply~Proposition~\ref{prop:isom-q-general}, we only have to construct a $\Sy_d$-invariant
isomorphism \smash{$\alpha\colon\widehat {PR}_\nu\simeq \widehat P_\bfa$} extending the isomorphism \eqref{eq:isom-Pol-q} such that $\alpha$ restricts to an isomorphism \smash{$\widehat {PA}'_\nu\simeq \widehat {PB}'_\bfa$} (we choose the subalgebras \smash{$\widehat {PA}'_\nu\subseteq \widehat {PA}_\nu$} and \smash{$\widehat {PB}'_\bfa\subseteq\widehat {PB}_\bfa$} in the same way as in~Section~\ref{subs:DG-isom-degen}). This can be done in the same way as in the degenerate case. However, some formulas in this case are different from the previous section because of the difference between~\eqref{eq:isom-Pol-degen} and~\eqref{eq:isom-Pol-q}. Here, we only give the modified formulas. The proofs are the same as in the previous section.

We consider the $\Sy_d$-invariant homomorphism $\alpha'\colon\widehat \Pol_\bfa\to\widehat {PR}_\nu$
\begin{gather*}
1_\bi\mapsto 1_\bi,\qquad
X_r1_\bi\mapsto i_r(Y_r+1)1_\bi.
\end{gather*}
Now, we extend $\alpha'$ to a homomorphism $\alpha\colon \widehat P_\bfa\to\widehat {PR}_\nu$ in the following way:
\begin{gather*}
\alpha(\theta_11_\bi)= \bigg(\prod_{i\in I,\,i\ne i_1}(i_1(Y_1+1)-i)^{\Lambda_{i}}\bigg)(-i_1)^{\Lambda_{i_1}}\Omega_11_\bi,\\ 
\alpha(\theta_{r})=(-1)^{r-1}\partial_{r-1}\cdots\partial_{2}\partial_{1}(\alpha(\theta_{1})).
\end{gather*}
As in the previous section, we can show that $\alpha$ is a $\Sy_d$-invariant isomorphism.

We get the following theorem.
\begin{thm}\label{thm:isoEAHecke}
There is an isomorphism of DG-algebras $\big(\widehat \cR(\nu),d_\Lambda\big)\simeq \big(\widehat {\cH}_\bfa,\partial_\bfQ\big)$.
\end{thm}

\begin{rem}
We could also take \smash{$\widehat {PA}_\nu=\widehat {PA}'_\nu=\widehat {\PolR}_\nu$} and \smash{$\widehat {PB}_\bfa=\widehat {PB}'_\bfa=\widehat \Pol_\bfa$}. Then we get (the completion version of) the usual Brundan--Kleshchev--Rouquier isomorphism.
\end{rem}

\subsection[The homology of bar H\_d and H\_d]{The homology of $\boldsymbol{\bar{\cH}_d}$ and $\boldsymbol{\cH_d}$}\label{sec:formalHecke}

We now have the tools to prove the following two propositions.
\begin{prop}\label{prop:formalDHecke}
 The homology of the DG-algebra $\big(\widebar{\cH}_d,\partial_\bfQ\big)$ is concentrated in degree $0$ and is isomorphic to
 \smash{$\widebar{H}_d^\bfQ$}.
\end{prop}

\begin{prop}\label{prop:formalAHecke}
The homology of the DG-algebra $(\cH_d,\partial_\bfQ)$ is concentrated in degree $0$ and is isomorphic to \smash{$H_d^\bfQ$}.
\end{prop}

First, we start from a similar statement for the KLR algebra.

\begin{prop}
\label{prop:formalKLRcomp}
The homology of the DG-algebra $\big(\widehat\cR(\nu),d_\Lambda\big)$ is concentrated in degree $0$ and is isomorphic to $R^\Lambda(\nu)$.
\end{prop}
\begin{proof}
It is proved in \cite[Proposition 4.14]{naissevaz3} that the homology of the DG-algebra $(\cR(\nu),d_\Lambda)$ is concentrated in degree $0$ and is isomorphic to $R^\Lambda(\nu)$. The same proof with minor modifications applies to our case. We just have to replace polynomials by power series.
\end{proof}

\begin{cor}\label{cor:formal-Hecke}
The homologies of the DG-algebras \smash{$\big(\widehat {\bar \cH}_\bfa,\partial_\bfQ\big)$} and $\big(\widehat {\cH}_\bfa,\partial_\bfQ\big)$ are concentrated in degree $0$ and are isomorphic to $\bar{H}_\bfa^\bfQ$ and ${H}_\bfa^\bfQ$, respectively.
\end{cor}

\begin{proof}
 The statement follows from Theorems~\ref{thm:isoEDHecke} and~\ref{thm:isoEAHecke}, Proposition~\ref{prop:formalKLRcomp} and from the usual Brundan--Kleshchev--Rouquier isomorphism.
\end{proof}

\begin{proof}[Proof of~Propositions~\ref{prop:formalDHecke} and~\ref{prop:formalAHecke}]
It is obvious that the homology group of\smash{ $\big(\bar{\cH}_d,\partial_\bfQ\big)$} in degree zero is $\bar{H}^\bfQ_d$. We only have to check that the homology groups in other degrees are zero.

Assume, that for some $i>0$, we have $H^i\big(\bar{\cH}_d,\partial_\bfQ\big)\ne 0$ and consider it as a $\Pol_d$-module. The annihilator of this $\Pol_d$-module is contained in some maximal ideal $\mathcal M\subseteq\Pol_d$. The ideal $\mathcal M$ is of the form $\mathcal M=(X_1-a_1,\ldots,X_d-a_d)$ for some $\bfa=(a_1,\ldots,a_d)\in\Bbbk^d$.

Then the completion of $H^i\big(\bar{\cH}_d,\partial_\bfQ\big)\ne 0$ with respect to the ideal $\mathcal M$ is nonzero.
This leads to a contradiction because \smash{$H^i\big(\widehat {\bar \cH}_\bfa,\partial_\bfQ\big)=0$} together with K\"unneth formula implies
\[
\Bbbk[[X_1-a_1,\ldots,X_d-a_d]]\otimes_{\Pol_d}H^i\big(\bar{\cH}_d,\partial_\bfQ\big)= 0 .
\]
Proposition~\ref{prop:formalAHecke} is proved in the same way.
\end{proof}

\subsection*{Acknowledgements}

We thank Jonathan Grant for useful discussions and the anonymous referees for the careful reading of our document.
PV was supported by the Fonds de la Recherche Scientifique -- FNRS under Grant no.~MIS-F.4536.19.

\pdfbookmark[1]{References}{ref}
\LastPageEnding

\end{document}